%%% ----------------------------------------------------------------------
\documentclass[12pt, reqno]{amsart}
\usepackage{amsmath, amsthm, amscd, amsfonts, amssymb, graphicx, color}
\usepackage[bookmarksnumbered, colorlinks, plainpages]{hyperref}
\hypersetup{colorlinks=true,linkcolor=red, anchorcolor=green, citecolor=cyan, urlcolor=red, filecolor=magenta, pdftoolbar=true}
\input{mathrsfs.sty}

\textheight 22.90truecm \textwidth 15.5truecm
\setlength{\oddsidemargin}{0.30in}\setlength{\evensidemargin}{0.30in}
\setlength{\topmargin}{-.5cm}

\newtheorem{theorem}{Theorem}[section]
\newtheorem{lemma}[theorem]{Lemma}

\newtheorem{cor}[theorem]{Corollary}
\theoremstyle{definition}

\theoremstyle{remark}
\newtheorem{remark}[theorem]{\bf{Remark}}
\numberwithin{equation}{section}
\begin{document}
	
 \title[Berezin number inequalities]{Development of  the Berezin number inequalities }
 
 \author[ P. Bhunia, A. Sen, K. Paul] { Pintu Bhunia, Anirban Sen, Kallol Paul}

 \address [Bhunia] {Department of Mathematics, Jadavpur University, Kolkata 700032, West     Bengal, India}
 \email{pintubhunia5206@gmail.com}
 \email{pbhunia.math.rs@jadavpuruniversity.in}

 \address [Sen] {Department of Mathematics, Jadavpur University, Kolkata 700032, West Bengal, India}
 \email{anirbansenfulia@gmail.com}

 \address[Paul] {Department of Mathematics, Jadavpur University, Kolkata 700032, West Bengal, India}
 \email{kalloldada@gmail.com}
 \email{kallol.paul@jadavpuruniversity.in}

 \thanks{First  author would like to thank UGC, Govt. of India for the financial support in the form of SRF. Second author would like to thank  CSIR, Govt. of India for the financial support in the form of JRF}
	
	\subjclass[2010]{47A30, 15A60, 47A12}
	\keywords{Berezin symbol, Berezin number, Reproducing kernel Hilbert space}
	
	\maketitle

\begin{abstract}
We present  new bounds for  the Berezin number inequalities which improve on the existing bounds. We also obtain bounds for the Berezin norm of operators as well as the sum of two operators.
\end{abstract}

%%% ----------------------------------------------------------------------
\maketitle
%%% ----------------------------------------------------------------------
%\tableofcontents

\section{\textbf{Introduction}}

\noindent 
Let $\mathbb{F}$ denote the field of scalars, real or complex, $\Omega $ be a non-empty set and $ \mathcal{F}(\Omega, \mathbb{F})$ be the set of functions from $\Omega $ to $ \mathbb{F}$. A set $\mathcal{H} \subset  \mathcal{F}(\Omega, \mathbb{F})$  is said to be a reproducing kernel Hilbert space (RKHS) on $\Omega$ if  $ \mathcal{H}$ is a Hilbert space and for every $ \lambda \in \Omega$, the linear evaluation functional $ E_{\lambda} : \mathcal{H} \to \mathbb{F}$, defined by $ E_{\lambda}(f) = f(\lambda),$ is bounded. Using Riesz representation theorem it is easy to see that for each $ \lambda \in \Omega$, there exists a unique vector $k_\lambda \in \mathcal{H}$, such that for every $ f \in \mathcal{H}$, $ f(\lambda) = E_\lambda(f) = \langle f, k_\lambda \rangle.$ The function $ k_\lambda$ is called the reproducing kernel for the element $\lambda$  and the set $\{ k_\lambda : \lambda \in \Omega\} $ is called the reproducing kernel of $\mathcal{H}.$  For $\lambda\in\Omega,$ let $\hat{k}_{\lambda} = k_\lambda/\|k_\lambda\|$ be a normalised reproducing kernel of $\mathcal{H}.$

\noindent 
Let  A be a bounded linear operator on $\mathcal{H}.$ The function $\widetilde{A}$ defined on $\Omega$ by $\widetilde{A}(\lambda)=\langle A\hat{k}_{\lambda},\hat{k}_{\lambda} \rangle$  is the Berezin symbol of $A$, see \cite{PR}.  The Berezin set and the Berezin number of the operator A are defined respectively by $$\textbf{Ber}(A):= \{\langle A\hat{k}_{\lambda},\hat{k}_{\lambda} \rangle: \lambda \in \Omega\} $$ and $$\textbf{ber}(A):=sup \{|\langle A\hat{k}_{\lambda},\hat{k}_{\lambda} \rangle|~:~ \lambda \in \Omega\}.$$
The numerical range and the numerical radius of $A$, to be denoted by $W(A) $ and $w(A)$ respectively,  are defined as 
\[ W(A) := \{ \langle Af,f \rangle : f \in \mathcal{H}, \|f\|=1\} ~\mbox{and} ~ w(A) := \sup_{\|f\|=1} | \langle Af,f \rangle|.\]
For some results about the numerical radius inequalities and their applications, we refer to see \cite{p1,p2,BPRIM,p3,p4,p5,EK}. 
It is well known that the numerical range is a convex set and the closure of the numerical range contains the spectrum of the operator $A$.  Clearly $ \textbf{Ber}(A) \subset W(A) $ and so $ \textbf{ber}(A) \leq w(A).$  Note that  $ \textbf{Ber}(A) $ is not necessarily convex.
The Berezin number of  $A$ satisfies the following: $ \textbf{ber}(\alpha A) = | \alpha |  \textbf{ber}(A) $ for all $ \alpha  \in \mathbb{F}$ and $ \textbf{ber}(A + B) \leq \textbf{ber}(A)  + \textbf{ber}(B)$  for bounded linear operators $A, B$ on $ \mathcal{H.}$  
It is well known (see  \cite[Prop. 6.2]{zhu} and \cite[Lemma 2]{karaev}) that any reproducing kernel Hilbert space of analytic
functions in the unit disc $\mathbb{D}$ (including Hardy and Bergman spaces) has the (Ber) property, i.e., the Berezin symbol of an operator determines the operator  uniquely in the sense that if $ \widetilde{A}(\lambda) = \widetilde{B}(\lambda) $ for all $ \lambda \in \Omega$ then $ A = B.$ 
The Berezin norm of A is defined by $$\|A\|_{ber}:= sup\{|\langle A\hat{k}_{\lambda},\hat{k}_{\mu} \rangle|: \lambda,\mu \in \Omega\},$$
where $ \hat{k}_{\lambda}$, $\hat{k}_{\mu}$ are normalized reproducing kernels for $ \lambda$, $\mu$, respectively.
Clearly from the definition, Berezin norm satisfy the following results: 
\begin{eqnarray*}
&&(i)\, \textbf{ber}(A)\leq \|A\|_{ber},\\
&&(ii)\, \|A\|_{ber}\leq \|A\|,\\
&&(iii)\, \|A^*\|_{ber}=\|A\|_{ber}.
\end{eqnarray*}
The Berezin symbol has been studied in details for Toeplitz and Hankel operators on Hardy and Bergman spaces. The Berezin symbol and Berezin number have been studied by many mathematicians over the years, a few of them are \cite{BLHY,hlb,ki,Trd}. In this paper, we study the Berezin number inequalities to develop upper and lower bounds for the Berezin number of bounded linear operators. The bounds obtained here a improve on the existing bounds. We also obtain bounds for the Berezin norm of operators as well as the sum of two operators.\\

\section{Berezin number inequalities}

\noindent
We begin this section with the following well known lemmas. First, note that $\mathcal{B}(\mathcal{H})$ stands for the set of all bounded linear operators on $\mathcal{H}$.

\begin{lemma}
	$($\cite[pp. 75-76]{halmos}$)$\label{lemma1}
	Let $A\in \mathcal{B}(\mathcal{H})$ and let $x\in \mathcal{H}$. Then 
	\[|\langle Ax,x\rangle|\leq \langle |A|x,x\rangle^{1/2}~~\langle |A^*|x,x\rangle^{1/2}.\]
\end{lemma}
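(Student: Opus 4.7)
The plan is to prove this classical mixed Schwarz inequality via the polar decomposition and the ordinary Cauchy--Schwarz inequality applied to suitably split factors.

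First, I would invoke the polar decomposition of $A$, writing $A = U|A|$ where $U$ is a partial isometry with initial space $\overline{\mathrm{range}(|A|)}$ and final space $\overline{\mathrm{range}(A)}$. I would then record the identity
\[
|A^*| = U|A|U^*,
\]
which follows from $AA^* = U|A|^2 U^* = (U|A|U^*)^2$ together with the fact that $U^*U$ acts as the identity on $\overline{\mathrm{range}(|A|)}$, so the nonnegative square root of $AA^*$ is precisely $U|A|U^*$.

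Next, I would split $|A|^{1/2}\cdot |A|^{1/2} = |A|$ inside the inner product and move one factor across the adjoint:
\[
\langle Ax,x\rangle = \langle U|A|^{1/2}\cdot |A|^{1/2}x,\,x\rangle = \langle |A|^{1/2}x,\; |A|^{1/2}U^*x\rangle.
\]
Applying the ordinary Cauchy--Schwarz inequality to this pairing gives
\[
|\langle Ax,x\rangle| \leq \||A|^{1/2}x\|\cdot \||A|^{1/2}U^*x\|.
\]
Finally, I would compute the two norms by the functional calculus: $\||A|^{1/2}x\|^2 = \langle |A|x,x\rangle$, while
\[
\||A|^{1/2}U^*x\|^2 = \langle |A|U^*x, U^*x\rangle = \langle U|A|U^*x,x\rangle = \langle |A^*|x,x\rangle,
\]
and taking square roots yields the stated bound.

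The only mildly delicate point is the identification $|A^*| = U|A|U^*$ when $U$ is merely a partial isometry (rather than a unitary); the initial/final space conventions for the polar decomposition handle this cleanly, so no genuine obstacle is expected. Everything else is a direct application of Cauchy--Schwarz.
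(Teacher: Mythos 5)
The paper does not prove this lemma; it is quoted directly from the cited reference (Halmos, pp.\ 75--76), and your argument is exactly the standard polar-decomposition proof given there. Your proof is correct: the identity $|A^*|=U|A|U^*$ is justified properly via the initial-space convention $U^*U|A|=|A|$, and the Cauchy--Schwarz step on $\langle |A|^{1/2}x,\,|A|^{1/2}U^*x\rangle$ together with the norm computations yields the stated bound, so there is nothing to add.
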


\begin{lemma}
	$($\cite[p. 20]{simon}$)$\label{lemma2}
	Let $A\in \mathcal{B}(\mathcal{H})$ be positive and let $x\in \mathcal{H}$ with $\|x\|=1$. Then \[\langle Ax,x\rangle^r\leq \langle A^rx,x\rangle, ~~\forall~~r\geq 1.\]
\end{lemma}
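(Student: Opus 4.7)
The plan is to reduce the operator inequality to a scalar inequality via the functional calculus and then invoke convexity. Since $A\in\mathcal{B}(\mathcal{H})$ is positive, its spectrum $\sigma(A)$ lies in $[0,\|A\|]$ and there is a projection-valued spectral measure $E$ with $f(A)=\int_{\sigma(A)} f(t)\, dE(t)$ for every bounded Borel function $f$. For the fixed unit vector $x$, I would form the scalar-valued measure $d\mu(t):=d\langle E(t)x,x\rangle$ on $\sigma(A)$, and observe that $\mu$ is a \emph{probability} measure since $\mu(\sigma(A))=\langle x,x\rangle=\|x\|^{2}=1$.

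Under this measure the two sides of the desired inequality become ordinary scalar integrals:
$$\langle Ax,x\rangle=\int_{\sigma(A)} t\, d\mu(t), \qquad \langle A^{r}x,x\rangle=\int_{\sigma(A)} t^{r}\, d\mu(t).$$
Because $\mathrm{supp}(\mu)\subset[0,\infty)$ and $t\mapsto t^{r}$ is convex on $[0,\infty)$ for every $r\geq 1$, Jensen's inequality applied to the probability measure $\mu$ yields
$$\left(\int_{\sigma(A)} t\, d\mu(t)\right)^{r}\leq \int_{\sigma(A)} t^{r}\, d\mu(t),$$
which is exactly $\langle Ax,x\rangle^{r}\leq \langle A^{r}x,x\rangle$.

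There is no real obstacle in this argument; the only point requiring attention is the normalization of $\mu$, which depends crucially on the hypothesis $\|x\|=1$ (without it one only gets a finite positive measure and the convexity bound acquires an extra factor of $\|x\|^{2(r-1)}$). If one prefers to bypass Jensen, the same conclusion follows from H\"older's inequality on $L^{1}(\mu)$ with conjugate exponents $p=r$ and $q=r/(r-1)$ applied to the factorization $t=t\cdot 1$, giving $\int t\, d\mu\leq \bigl(\int t^{r}\, d\mu\bigr)^{1/r}$; raising to the $r$-th power produces the stated inequality. The case $r=1$ is of course an identity.
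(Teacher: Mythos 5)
The paper states this lemma without proof, citing Simon's book; it is the classical H\"older--McCarthy inequality. Your argument --- passing to the scalar probability measure $d\mu(t)=d\langle E(t)x,x\rangle$ via the spectral theorem and applying Jensen's (or H\"older's) inequality to the convex function $t\mapsto t^{r}$ --- is correct, complete, and is in fact the standard proof that the cited reference supplies, including the correct observation that $\|x\|=1$ is what makes $\mu$ a probability measure.
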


\begin{lemma}$(${\cite[Th. 5]{K88}}$)$.\label{lem1}
	Let $A\in \mathcal{B}(\mathcal{H})$. Let $f$ and $g$ be nonnegative functions on $[0,\infty)$ which are continuous and satisfy the relation $f(t)g(t)=t$ for all $t\in [0,\infty).$ Then \[|\langle Ax,y\rangle|\leq  \|f(|A|)x\| \|g(|A^*|)y\|, \] for all $x,y \in \mathcal{H}. $
\end{lemma}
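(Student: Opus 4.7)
The plan is to exploit the polar decomposition of $A$ together with functional calculus. Write $A=U|A|$, where $U$ is the partial isometry whose initial space is $\overline{\mathrm{range}(|A|)}$ and whose final space is $\overline{\mathrm{range}(A)}$. The two facts I will need are: first, $U|A|U^{*}=|A^{*}|$; and second, as a consequence, the intertwining relation $U\,h(|A|)=h(|A^{*}|)\,U$ for every continuous function $h$ on $[0,\|A\|]$. The latter is verified for monomials by induction (the base case $U|A|=|A^{*}|U$ follows from $U|A|U^{*}=|A^{*}|$ once one multiplies by $U$ on the right and uses that $U^{*}U$ fixes $|A|$), then for polynomials by linearity, and finally for continuous $h$ by a Stone--Weierstrass approximation inside the bounded functional calculus.

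Next I use the hypothesis $f(t)g(t)=t$ on $[0,\infty)$. By functional calculus, $f(|A|)$ and $g(|A|)$ are commuting positive operators with $f(|A|)g(|A|)=|A|$. Substituting this into the polar decomposition gives
\[
A=U|A|=U\,g(|A|)\,f(|A|)=g(|A^{*}|)\,U\,f(|A|),
\]
where the last equality uses the intertwining relation applied to $h=g$.

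With this factorisation in hand, the inequality follows from the ordinary Cauchy--Schwarz inequality: for any $x,y\in\mathcal{H}$,
\[
|\langle Ax,y\rangle|=\bigl|\langle g(|A^{*}|)Uf(|A|)x,\,y\rangle\bigr|=\bigl|\langle Uf(|A|)x,\,g(|A^{*}|)y\rangle\bigr|\le\|Uf(|A|)x\|\,\|g(|A^{*}|)y\|,
\]
and then $\|Uf(|A|)x\|\le\|f(|A|)x\|$, since a partial isometry is a contraction, yields the claimed bound.

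The step I expect to be the only delicate one is justifying the intertwining identity $U\,h(|A|)=h(|A^{*}|)\,U$ when $h$ does not vanish at $0$; for $h$ a polynomial with $h(0)=0$ the identity extends directly from $U|A|=|A^{*}|U$ by induction, and the constant term is handled trivially because $U\cdot cI=cI\cdot U$, so the identity is true for all polynomials and hence, by continuous functional calculus, for all continuous $h$ on the spectrum of $|A|$. Once this is in place, the rest of the argument is just a bookkeeping application of Cauchy--Schwarz.
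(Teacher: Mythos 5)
Your proof is correct; note that the paper offers no proof of this lemma at all---it is quoted directly from Kittaneh \cite{K88}---and your argument is essentially Kittaneh's original one: polar decomposition $A=U|A|$, the functional-calculus factorisation $|A|=g(|A|)f(|A|)$, the intertwining relation $Uh(|A|)=h(|A^{*}|)U$, and then Cauchy--Schwarz together with the contractivity of the partial isometry. You also navigate the one genuinely delicate point correctly: you rely on the \emph{one-sided} intertwining identity, which holds for every continuous $h$ (including $h(0)\neq 0$, since constants commute with $U$ on both sides), rather than the conjugated form $Uh(|A|)U^{*}=h(|A^{*}|)$, which would fail on $(\overline{\mathrm{ran}\,A})^{\perp}$ whenever $h(0)\neq 0$.
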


	%\begin{lemma}$($\cite[p. 20]{simon}$)$.\label{lem2}
	%Let $A\in \mathcal{B}(\mathcal{H})$ be positive, i.e., $A\geq 0.$ Then \[\langle Ax,x\rangle^r\leq \langle A^rx,x\rangle,\] for all $r\geq 1$ and for all $x \in \mathcal{H}$ with $\|x\|=1.$
%\end{lemma}

\begin{lemma}$($\cite{buzano}$)$\label{lem3}
	Let $x,y,e\in \mathcal{H}$ with $\|e\|=1.$ Then 
	\[|\langle x,e\rangle \langle e,y\rangle|\leq \frac{1}{2}\left(\|x\| \|y\|+|\langle x,y\rangle|\right).\]
\end{lemma}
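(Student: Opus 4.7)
The plan is to exploit the fact that, for any unit vector $e \in \mathcal{H}$, the combination $R := 2P_e - I$ is a self-adjoint unitary, where $P_e$ denotes the orthogonal projection onto $\mathrm{span}\{e\}$, i.e., $P_e z = \langle z, e\rangle e$. Since $P_e^2 = P_e = P_e^*$, a direct computation gives $R^*R = (2P_e - I)^2 = 4P_e^2 - 4P_e + I = I$, so $\|R\| = 1$. This single observation is the only real insight in the argument; everything else is mechanical.

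I would then record the identity
\[ \langle R x, y\rangle \;=\; 2\langle P_e x, y\rangle - \langle x, y\rangle \;=\; 2\langle x, e\rangle \langle e, y\rangle - \langle x, y\rangle, \]
valid for all $x, y \in \mathcal{H}$, where the second equality uses $\langle P_e x, y\rangle = \langle \langle x, e\rangle e, y\rangle = \langle x, e\rangle \langle e, y\rangle$. Combining this identity with the Cauchy--Schwarz inequality and the norm bound $\|Rx\| \le \|R\|\,\|x\| = \|x\|$ yields
\[ \bigl|\, 2\langle x, e\rangle \langle e, y\rangle - \langle x, y\rangle \,\bigr| \;\le\; \|x\|\,\|y\|. \]

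A single application of the triangle inequality then gives $2\bigl|\langle x, e\rangle \langle e, y\rangle\bigr| \le \bigl|\langle x, y\rangle\bigr| + \|x\|\,\|y\|$, and dividing by $2$ produces the asserted estimate. There is no genuine obstacle: the entire proof rests on recognising that the projection combination $2P_e - I$ is an isometric reflection, after which the inequality falls out by a one-line norm estimate and a single triangle inequality. An alternative route, which I would keep as a backup, is to decompose $x$ and $y$ orthogonally along $e$ and $e^{\perp}$ and use Cauchy--Schwarz on the $e^{\perp}$-component, but the reflection argument is cleaner and avoids the bookkeeping of components.
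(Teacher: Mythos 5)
Your proof is correct. Note first that the paper itself offers no proof of this lemma: it is stated as a known result and attributed to \cite{buzano}, so there is no in-paper argument to compare against. Your reflection argument is a clean, self-contained derivation: $R=2P_e-I$ satisfies $R^*=R$ and $R^2=I$, hence $\|Rx\|=\|x\|$, and Cauchy--Schwarz applied to $\langle Rx,y\rangle=2\langle x,e\rangle\langle e,y\rangle-\langle x,y\rangle$ gives the intermediate bound
\[
\bigl|\,2\langle x,e\rangle\langle e,y\rangle-\langle x,y\rangle\,\bigr|\le\|x\|\,\|y\|,
\]
from which the stated inequality follows by one triangle inequality. This intermediate bound is in fact strictly stronger than Buzano's inequality itself (it locates $2\langle x,e\rangle\langle e,y\rangle$ in a disc of radius $\|x\|\|y\|$ centred at $\langle x,y\rangle$, rather than merely bounding its modulus), so your route gives slightly more than is asked. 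The alternative you mention --- decomposing $x$ and $y$ along $e$ and $e^\perp$ and applying Cauchy--Schwarz to the orthogonal components --- is the other standard proof and is equally valid; the reflection version simply packages that bookkeeping into the single identity $R^2=I$. No gaps.
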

	
\begin{lemma}$($\cite{v}$)$\label{lem4}
	For $i=1,2,\ldots,n$, let $a_i$ be a positive real number. Then 
	\[\left( \sum_{i=1}^na_i\right)^r \leq n^{r-1}\sum_{i=1}^na_i^r,\] for all $r\geq 1$.
\end{lemma}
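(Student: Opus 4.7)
The plan is to derive the inequality from the convexity of the map $t\mapsto t^r$ on $[0,\infty)$ when $r\geq 1$. The single structural fact doing the work is Jensen's inequality applied to the uniform probability measure on $\{a_1,\ldots,a_n\}$, which gives
\[
\left(\frac{1}{n}\sum_{i=1}^n a_i\right)^r \leq \frac{1}{n}\sum_{i=1}^n a_i^r.
\]
Multiplying both sides by $n^r$ yields
\[
\left(\sum_{i=1}^n a_i\right)^r \leq n^{r-1}\sum_{i=1}^n a_i^r,
\]
which is exactly the statement.

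If one prefers to avoid invoking Jensen's inequality, an equally short alternative is H\"older's inequality with conjugate exponents $r$ and $r/(r-1)$ applied to the pair of sequences $(1,\ldots,1)$ and $(a_1,\ldots,a_n)$:
\[
\sum_{i=1}^n a_i = \sum_{i=1}^n 1\cdot a_i \leq \left(\sum_{i=1}^n 1\right)^{(r-1)/r}\left(\sum_{i=1}^n a_i^r\right)^{1/r} = n^{(r-1)/r}\left(\sum_{i=1}^n a_i^r\right)^{1/r},
\]
and raising both sides to the $r$-th power produces the desired bound. A third option would be induction on $n$, but it is considerably more cumbersome than either of the two one-line arguments above.

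There is essentially no obstacle to this proof; the result is a textbook inequality and the only point worth a sentence of comment is the boundary case $r=1$, where the constant $n^{r-1}$ equals $1$ and the inequality collapses to equality. Between the two approaches I would present the Jensen version, since it makes the role of the convexity of $t^r$ completely transparent and handles the equality case (which occurs when all $a_i$ are equal) without extra effort.
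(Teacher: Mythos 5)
Your proof is correct. The paper gives no proof of this lemma at all --- it is quoted as a known result from Vasi\'c and Ke\^cki\'c \cite{v} --- so there is nothing to compare against; your Jensen argument (convexity of $t\mapsto t^r$ applied to the average $\frac{1}{n}\sum_{i=1}^n a_i$, then multiplying by $n^r$) is the standard derivation, and the H\"older alternative is equally valid.
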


We are now ready to present the first result of this section.

\begin{theorem}\label{theo1}
	Let $A\in\mathcal{B}(\mathcal{H})$. Then \[\textbf{ber}^{2r}(A)\leq \textbf{ber}\left(\alpha|A|^{2r}+(1-\alpha)|A^*|^{2r}\right),\]
	for all $r\geq 1$ and for all $\alpha\in [0,1].$
\end{theorem}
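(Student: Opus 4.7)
The plan is to prove the pointwise inequality
\[|\widetilde{A}(\lambda)|^{2r}\leq \bigl\langle\bigl(\alpha|A|^{2r}+(1-\alpha)|A^*|^{2r}\bigr)\hat{k}_\lambda,\hat{k}_\lambda\bigr\rangle\]
for every $\lambda\in\Omega$, and then take the supremum over $\lambda$. Since $\alpha|A|^{2r}+(1-\alpha)|A^*|^{2r}$ is a positive operator, its Berezin symbol is real and nonnegative, so the sup on the right really is the Berezin number of that operator, and the claim follows.

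To produce such a pointwise bound, I would not invoke the $\mu=\tfrac12$ version of the mixed Schwarz inequality (Lemma \ref{lemma1}), because that would only give the case $\alpha=\tfrac12$. Instead I would use the Heinz--Kato form of Lemma \ref{lem1} with the particular choice $f(t)=t^{\alpha}$ and $g(t)=t^{1-\alpha}$, which clearly satisfies $f(t)g(t)=t$ on $[0,\infty)$. Taking $x=y=\hat{k}_\lambda$ gives
\[|\widetilde{A}(\lambda)|^{2}\leq \langle|A|^{2\alpha}\hat{k}_\lambda,\hat{k}_\lambda\rangle\,\langle|A^*|^{2(1-\alpha)}\hat{k}_\lambda,\hat{k}_\lambda\rangle,\]
so that raising to the $r$-th power yields
\[|\widetilde{A}(\lambda)|^{2r}\leq \langle|A|^{2\alpha}\hat{k}_\lambda,\hat{k}_\lambda\rangle^{r}\,\langle|A^*|^{2(1-\alpha)}\hat{k}_\lambda,\hat{k}_\lambda\rangle^{r}.\]

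The next step is to promote the exponents inside the brackets. Assuming $0<\alpha<1$, I would apply Lemma \ref{lemma2} to the positive operator $|A|^{2\alpha}$ with exponent $r/\alpha\geq 1$, getting $\langle|A|^{2\alpha}\hat{k}_\lambda,\hat{k}_\lambda\rangle^{r/\alpha}\leq\langle|A|^{2r}\hat{k}_\lambda,\hat{k}_\lambda\rangle$, and then raise to the power $\alpha$ to conclude
\[\langle|A|^{2\alpha}\hat{k}_\lambda,\hat{k}_\lambda\rangle^{r}\leq \langle|A|^{2r}\hat{k}_\lambda,\hat{k}_\lambda\rangle^{\alpha},\]
and symmetrically for $|A^*|$ with exponent $r/(1-\alpha)$. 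Substituting back gives
\[|\widetilde{A}(\lambda)|^{2r}\leq \langle|A|^{2r}\hat{k}_\lambda,\hat{k}_\lambda\rangle^{\alpha}\,\langle|A^*|^{2r}\hat{k}_\lambda,\hat{k}_\lambda\rangle^{1-\alpha}.\]
A single application of the weighted AM--GM inequality $p^{\alpha}q^{1-\alpha}\leq \alpha p+(1-\alpha)q$ then converts the product into precisely the convex combination required, and the desired pointwise bound follows. The endpoint cases $\alpha=0$ and $\alpha=1$ either fall out by continuity or can be recovered directly from the Cauchy--Schwarz bound $|\widetilde{A}(\lambda)|\leq \|A^*\hat{k}_\lambda\|$ (respectively $\|A\hat{k}_\lambda\|$) followed by Lemma \ref{lemma2}.

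The only subtle point, in my view, is matching the free parameter $\alpha$ of the statement with the parameter appearing in the Heinz--Kato inequality: one has to use the Kato splitting at the very first step, rather than the symmetric Halmos inequality, so that the weights $\alpha$ and $1-\alpha$ survive all the way through the McCarthy and AM--GM steps. Once that choice is made, each remaining step is a direct invocation of one of the lemmas already stated.
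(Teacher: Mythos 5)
Your proof is correct, but it takes a genuinely different route from the paper's. The paper splits $|\langle A\hat{k}_{\lambda},\hat{k}_{\lambda}\rangle|$ as a convex combination of itself and $\|A^*\hat{k}_{\lambda}\|$, applies the symmetric mixed Schwarz inequality (Lemma \ref{lemma1}) together with Lemma \ref{lemma2} and the elementary bound $xy\leq\frac{1}{2}(x^2+y^2)$, and thereby first obtains the inequality only with the weights $\frac{\alpha}{2}$ and $1-\frac{\alpha}{2}$; the full range $\alpha\in[0,1]$ is then recovered by running the same argument for $A^*$ and combining the two half-range estimates. You instead inject the parameter $\alpha$ at the very first step through Lemma \ref{lem1} with $f(t)=t^{\alpha}$, $g(t)=t^{1-\alpha}$, then use Lemma \ref{lemma2} with exponents $r/\alpha$ and $r/(1-\alpha)$ and a single weighted AM--GM step. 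This is more direct (no doubling trick, no convexity splitting), it handles the endpoints $\alpha=0,1$ cleanly by a separate elementary argument exactly where $r/\alpha$ would degenerate, and it actually establishes the sharper pointwise estimate $|\langle A\hat{k}_{\lambda},\hat{k}_{\lambda}\rangle|^{2r}\leq\langle|A|^{2r}\hat{k}_{\lambda},\hat{k}_{\lambda}\rangle^{\alpha}\,\langle|A^*|^{2r}\hat{k}_{\lambda},\hat{k}_{\lambda}\rangle^{1-\alpha}$ before the final relaxation, which is strictly stronger than the convex combination and could be recorded as a result in its own right. The paper's route, by contrast, gets by with only the unweighted Halmos inequality at the cost of the $A\leftrightarrow A^*$ symmetrization. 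All of your individual steps (positivity of the right-hand operator so that the supremum is indeed its Berezin number, $r/\alpha\geq 1$, monotonicity of $t\mapsto t^{\alpha}$) check out.
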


\begin{proof}
	Let $\hat{k}_{\lambda}$ be a normalised reproducing kernel of $\mathcal{H}.$ Then for all $\alpha\in [0,1]$, we have
	\begin{eqnarray*}
	|\langle A\hat{k}_{\lambda},\hat{k}_{\lambda}\rangle|& =& \alpha |\langle A\hat{k}_{\lambda},\hat{k}_{\lambda}\rangle|+(1-\alpha)|\langle A\hat{k}_{\lambda},\hat{k}_{\lambda}\rangle|\\
	& & \leq \alpha |\langle A\hat{k}_{\lambda},\hat{k}_{\lambda}\rangle|+(1-\alpha)\|A^*\hat{k}_{\lambda}\|.
	\end{eqnarray*}
	By convexity of  $f(t)=t^{2r}~(r \geq1)$, we get
	\begin{align*}
	    &|\langle A\hat{k}_{\lambda},\hat{k}_{\lambda}\rangle|^{2r}\\&\leq \alpha |\langle  A\hat{k}_{\lambda},\hat{k}_{\lambda}\rangle|^{2r}+(1-\alpha)\|A^*\hat{k}_{\lambda}\|^{2r}\\
		&\leq  \alpha \langle |A|\hat{k}_{\lambda},\hat{k}_{\lambda}\rangle^{r}~~\langle |A^*|\hat{k}_{\lambda},\hat{k}_{\lambda}\rangle^{r}+(1-\alpha)\langle |A^*|^2\hat{k}_{\lambda},\hat{k}_{\lambda}\rangle ^{r}, ~~\mbox{by Lemma \ref{lemma1}}\\
		&\leq  \alpha \langle |A|^{r}\hat{k}_{\lambda},\hat{k}_{\lambda}\rangle~~\langle |A^*|^{r}\hat{k}_{\lambda},\hat{k}_{\lambda}\rangle+(1-\alpha)\langle |A^*|^{2r}\hat{k}_{\lambda},\hat{k}_{\lambda}\rangle, ~~\mbox{by Lemma \ref{lemma2}}\\
		&\leq \frac{\alpha}{2} \left (\langle |A|^{r}\hat{k}_{\lambda},\hat{k}_{\lambda}\rangle^2 + \langle |A^*|^{r}\hat{k}_{\lambda},\hat{k}_{\lambda}\rangle^2 \right ) +(1-\alpha)\langle |A^*|^{2r}\hat{k}_{\lambda},\hat{k}_{\lambda}\rangle\\
		&\leq  \frac{\alpha}{2} \left (\langle |A|^{2r}\hat{k}_{\lambda},\hat{k}_{\lambda} \rangle + \langle |A^*|^{2r}\hat{k}_{\lambda},\hat{k}_{\lambda} \rangle \right ) +(1-\alpha) \langle |A^*|^{2r}\hat{k}_{\lambda},\hat{k}_{\lambda} \rangle\\
		&= \left \langle \left \{ \frac{\alpha}{2} \left ( |A|^{2r}+ |A^*|^{2r}\right )+(1-\alpha) |A^*|^{2r} \right\} \hat{k}_{\lambda},\hat{k}_{\lambda} \right \rangle\\ 
		&\leq  \textbf{ber}\left(\frac{\alpha}{2}|A|^{2r}+(1-\frac{\alpha}{2})|A^*|^{2r}\right).
	\end{align*}
	Taking supremum over all $\lambda\in\Omega$, we get
	\begin{eqnarray}\label{eqn1}
	\textbf{ber}^{2r}(A)\leq \textbf{ber} \left( \frac{\alpha}{2} |A|^{2r}+(1-\frac{\alpha}{2}) |A^*|^{2r}\right).
	\end{eqnarray}
	Replacing $A$ by $A^*$ in (\ref{eqn1}), we get
	\begin{eqnarray}\label{eqn2}
	\textbf{ber}^{2r}(A)\leq \textbf{ber}\left((1-\frac{\alpha}{2})  |A|^{2r}+\frac{\alpha}{2} |A^*|^{2r}\right).
	\end{eqnarray}
	Combining (\ref{eqn1}) and (\ref{eqn2}), we get
	\[\textbf{ber}^{2r}(A)\leq \textbf{ber}\left(\alpha|A|^{2r}+(1-\alpha)|A^*|^{2r}\right).\] 
	
\end{proof}

Considering $r=1$ in Theorem \ref{theo1} we get the following corollary.

\begin{cor}\label{cor11}
	Let $A\in\mathcal{B}(\mathcal{H})$. Then 
	\begin{eqnarray}\label{inq1}
		\textbf{ber}^{2}(A)&\leq& \min_{\alpha\in  [0,1]}\textbf{ber}\left(\alpha A^*A+(1-\alpha)AA^*\right)\\
		&\leq& \frac{1}{2}\textbf{ber}\left(A^*A+AA^*\right).
	\end{eqnarray}
	
\end{cor}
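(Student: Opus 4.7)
The plan is to read off this corollary as a direct specialization of Theorem \ref{theo1}. Taking $r=1$ there gives, for every $\alpha\in[0,1]$,
\[
\textbf{ber}^{2}(A)\leq \textbf{ber}\bigl(\alpha|A|^{2}+(1-\alpha)|A^{*}|^{2}\bigr),
\]
and since $|A|^{2}=A^{*}A$ and $|A^{*}|^{2}=AA^{*}$, this is exactly the bound
\[
\textbf{ber}^{2}(A)\leq \textbf{ber}\bigl(\alpha A^{*}A+(1-\alpha)AA^{*}\bigr)
\]
for each $\alpha\in[0,1]$. Since the left-hand side does not depend on $\alpha$, I can take the infimum (which is attained because $[0,1]$ is compact and the map $\alpha\mapsto \textbf{ber}(\alpha A^{*}A+(1-\alpha)AA^{*})$ is continuous, being a supremum of affine functions of $\alpha$, so we may write $\min$ instead of $\inf$) over $\alpha\in[0,1]$ on the right, which establishes the first displayed inequality.

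For the second inequality, I would just choose $\alpha=1/2$ in the previous bound; then
\[
\min_{\alpha\in[0,1]}\textbf{ber}\bigl(\alpha A^{*}A+(1-\alpha)AA^{*}\bigr)\leq \textbf{ber}\!\left(\tfrac{1}{2}A^{*}A+\tfrac{1}{2}AA^{*}\right)=\tfrac{1}{2}\,\textbf{ber}\bigl(A^{*}A+AA^{*}\bigr),
\]
where the final equality uses the positive homogeneity $\textbf{ber}(\beta T)=|\beta|\,\textbf{ber}(T)$ recorded in the introduction. Chaining the two displays yields the stated corollary.

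There is no genuine obstacle here: the content of the corollary is entirely contained in Theorem \ref{theo1}. The only small point to watch is the justification for replacing $\inf$ by $\min$, handled above by continuity in $\alpha$; otherwise the argument is a one-line substitution and an application of homogeneity of $\textbf{ber}(\cdot)$.
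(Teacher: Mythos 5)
Your proposal is correct and follows exactly the paper's route: the paper derives this corollary by simply setting $r=1$ in Theorem \ref{theo1} and identifying $|A|^{2}=A^{*}A$, $|A^{*}|^{2}=AA^{*}$, then taking the best $\alpha$ (with $\alpha=1/2$ giving the second bound). Your additional remark justifying the replacement of $\inf$ by $\min$ via continuity in $\alpha$ is a small refinement the paper leaves implicit.
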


\begin{remark}
	Clearly the inequallity (\ref{inq1}) is better than the existing inequality
    $\textbf{ber}^{2}(A)\leq\frac{1}{2}\textbf{ber}(|A|^{2}+|A^*|^{2}),$ obtained in \cite[Cor. 3.5 (i)]{Trd}.
	
\end{remark}

Our next result is 
	
\begin{theorem}\label{theo3} 
	Let $A\in \mathcal{B}(\mathcal{H})$. Then for all $r\geq 1$ and for all $\alpha\in [0,1],$ 
	\begin{eqnarray}\label{eqn 3}
	\textbf{ber}^{2r}(A) \leq \frac{\alpha}{2}\textbf{ber}^r(A^2)+\textbf{ber}\left( \frac{\alpha}{4}|A|^{2r}+\left(1-\frac{3}{4}\alpha\right)|A^*|^{2r}\right),
	\end{eqnarray}
and 
	\begin{eqnarray}\label{eqn 4}
	\textbf{ber}^{2r}(A) \leq \frac{\alpha}{2}\textbf{ber}^r(A^2)+\textbf{ber}\left( \left(1-\frac{3}{4}\alpha\right)|A|^{2r}+\frac{\alpha}{4}|A^*|^{2r}\right).
	\end{eqnarray}
\end{theorem}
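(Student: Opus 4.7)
The plan is to mirror the structure of the proof of Theorem \ref{theo1} but insert Buzano's inequality (Lemma \ref{lem3}) to produce the $\textbf{ber}(A^2)$ term. Fix a normalised reproducing kernel $\hat{k}_{\lambda}$ and split
\[
|\langle A\hat{k}_{\lambda},\hat{k}_{\lambda}\rangle|^{2r}
=\alpha\,|\langle A\hat{k}_{\lambda},\hat{k}_{\lambda}\rangle|^{2r}
+(1-\alpha)\,|\langle A\hat{k}_{\lambda},\hat{k}_{\lambda}\rangle|^{2r}.
\]
I would then bound each piece separately and recombine. For the easier $(1-\alpha)$-piece, Cauchy--Schwarz gives $|\langle A\hat{k}_{\lambda},\hat{k}_{\lambda}\rangle|^{2r}\le \|A^*\hat{k}_{\lambda}\|^{2r}=\langle |A^*|^2\hat{k}_{\lambda},\hat{k}_{\lambda}\rangle^{r}$, after which Lemma \ref{lemma2} upgrades this to $\langle |A^*|^{2r}\hat{k}_{\lambda},\hat{k}_{\lambda}\rangle$.

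The $\alpha$-piece is the heart of the argument. Applying Lemma \ref{lem3} with $x=A\hat{k}_{\lambda}$, $y=A^*\hat{k}_{\lambda}$, $e=\hat{k}_{\lambda}$ and using $\langle A\hat k_\lambda,A^*\hat k_\lambda\rangle=\langle A^2\hat k_\lambda,\hat k_\lambda\rangle$ yields
\[
|\langle A\hat{k}_{\lambda},\hat{k}_{\lambda}\rangle|^{2}
\le \tfrac{1}{2}\bigl(\|A\hat{k}_{\lambda}\|\,\|A^*\hat{k}_{\lambda}\|
+|\langle A^{2}\hat{k}_{\lambda},\hat{k}_{\lambda}\rangle|\bigr).
\]
Raising to the power $r\ge 1$ and using Lemma \ref{lem4} (with $n=2$) gives
\[
|\langle A\hat{k}_{\lambda},\hat{k}_{\lambda}\rangle|^{2r}
\le \tfrac{1}{2}\bigl((\|A\hat{k}_{\lambda}\|\|A^*\hat{k}_{\lambda}\|)^{r}
+|\langle A^{2}\hat{k}_{\lambda},\hat{k}_{\lambda}\rangle|^{r}\bigr).
\]
Then AM--GM gives $\|A\hat{k}_{\lambda}\|^{r}\|A^*\hat{k}_{\lambda}\|^{r}\le \tfrac{1}{2}(\|A\hat{k}_{\lambda}\|^{2r}+\|A^*\hat{k}_{\lambda}\|^{2r})$, and Lemma \ref{lemma2} applied to the positive operators $|A|^{2}$ and $|A^{*}|^{2}$ bounds these by $\langle |A|^{2r}\hat{k}_{\lambda},\hat{k}_{\lambda}\rangle$ and $\langle |A^{*}|^{2r}\hat{k}_{\lambda},\hat{k}_{\lambda}\rangle$ respectively. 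Multiplying by $\alpha$ produces
\[
\alpha|\langle A\hat{k}_{\lambda},\hat{k}_{\lambda}\rangle|^{2r}
\le \tfrac{\alpha}{2}|\langle A^{2}\hat{k}_{\lambda},\hat{k}_{\lambda}\rangle|^{r}
+\tfrac{\alpha}{4}\langle |A|^{2r}\hat{k}_{\lambda},\hat{k}_{\lambda}\rangle
+\tfrac{\alpha}{4}\langle |A^{*}|^{2r}\hat{k}_{\lambda},\hat{k}_{\lambda}\rangle.
\]

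Adding the bound for the $(1-\alpha)$-piece collapses the two $|A^{*}|^{2r}$-coefficients into $1-\tfrac{3\alpha}{4}$, so after rewriting the operator part as a single inner product and taking the supremum over $\lambda\in\Omega$ we obtain \eqref{eqn 3}. For \eqref{eqn 4} I would substitute $A^{*}$ for $A$ in \eqref{eqn 3} and use the elementary identities $|(A^{*})^{*}|=|A^{*}|$, $|A^{*}|$ swapped with $|A|$, together with $\textbf{ber}(B^{*})=\textbf{ber}(B)$ (since $\widetilde{B^{*}}(\lambda)=\overline{\widetilde{B}(\lambda)}$), which also gives $\textbf{ber}((A^{*})^{2})=\textbf{ber}((A^{2})^{*})=\textbf{ber}(A^{2})$. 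The main obstacle is booking the correct constants through the two applications of Lemma \ref{lem4}: the factor $2^{r-1}$ in front has to cancel the $2^{-r}$ coming from squaring the Buzano bound, and after a second application through AM--GM this cancellation must produce precisely the coefficient $\tfrac{\alpha}{4}$ on $|A|^{2r}$, matching the stated inequality.
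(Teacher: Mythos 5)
Your proposal is correct and follows essentially the same route as the paper's proof: the same $\alpha/(1-\alpha)$ split, Lemma \ref{lemma2} on the $(1-\alpha)$-piece, and Buzano's inequality (Lemma \ref{lem3}) with $x=A\hat{k}_{\lambda}$, $y=A^*\hat{k}_{\lambda}$, $e=\hat{k}_{\lambda}$ on the $\alpha$-piece to generate the $\frac{\alpha}{2}\textbf{ber}^r(A^2)$ term, followed by replacing $A$ with $A^*$ for the second inequality. The only difference is that you spell out the power-$r$ bookkeeping (the $2^{r-1}\cdot 2^{-r}=\tfrac12$ cancellation and the AM--GM step yielding the coefficient $\tfrac{\alpha}{4}$) that the paper compresses into a single line labelled ``by Lemma \ref{lem3}''; your accounting of those constants is accurate.
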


\begin{proof}
	Let $\hat{k}_{\lambda}$ be a normalised reproducing kernel of $\mathcal{H}.$ Then using Cauchy-Schwarz inequality, we get
	 \begin{eqnarray*}
	 	|\langle A\hat{k}_{\lambda},\hat{k}_{\lambda}\rangle|& = & \alpha |\langle A\hat{k}_{\lambda},\hat{k}_{\lambda}\rangle|+(1-\alpha)|\langle A\hat{k}_{\lambda},\hat{k}_{\lambda}\rangle|\\
	 	& & \leq \alpha |\langle A\hat{k}_{\lambda},\hat{k}_{\lambda}\rangle|+(1-\alpha)\|A^*\hat{k}_{\lambda}\|.
	 	\end{eqnarray*}
	By convexity of  $f(t)=t^{2r}~(r \geq1)$, we get
	\begin{align*}
		&|\langle A\hat{k}_{\lambda},\hat{k}_{\lambda}\rangle|^{2r}\\&\leq \alpha |\langle A\hat{k}_{\lambda},\hat{k}_{\lambda}\rangle|^{2r}+(1-\alpha)\|A^*\hat{k}_{\lambda}\|^{2r}\\ 
		&\leq\alpha |\langle A\hat{k}_{\lambda},\hat{k}_{\lambda}\rangle|^{2r}+(1-\alpha)\langle |A^*|^{2r}\hat{k}_{\lambda},\hat{k}_{\lambda}\rangle, ~~\mbox{by Lemma \ref{lemma2}}\\
		&\leq \frac{\alpha }{2} |\langle A^2\hat{k}_{\lambda},\hat{k}_{\lambda}\rangle|^{r}+\frac{\alpha }{4} \left \langle (|A|^{2r}+|A^*|^{2r})\hat{k}_{\lambda},\hat{k}_{\lambda} \right \rangle+(1-\alpha)\langle |A^*|^{2r}\hat{k}_{\lambda},\hat{k}_{\lambda}\rangle, \\
		&\,\,\,\,\,\,\,\,\,\,\,\,\,\,\,\,\,\,\,\,\,\,\,\,\,\,\,\,\,\,\,\,\,\,\,\,\,\,\,\,\,\,\,\,\,\,\,\,\,\,\,\,\,\,\,\,\,\,\,\,\,\,\,\,\,\,\,\,\,\,\,\,\,\,\,\,\,\,\,\,\,\,\,\,\,\,\,\,\,\,\,\,\,\,\,\,\,\,\,\,\,\,\,\,\,\,\,\,\,~~\mbox{by Lemma \ref{lem3}}\\
		&= \frac{\alpha }{2} |\langle A^2\hat{k}_{\lambda},\hat{k}_{\lambda}\rangle|^{r}+ \left \langle \left \{ \frac{\alpha}{4} \left ( |A|^{2r}+ |A^*|^{2r}\right )+(1-\alpha) |A^*|^{2r} \right\} \hat{k}_{\lambda},\hat{k}_{\lambda} \right \rangle\\
		&= \frac{\alpha }{2} |\langle A^2\hat{k}_{\lambda},\hat{k}_{\lambda}\rangle|^{r}+\left \langle \left \{  \frac{\alpha}{4}   |A|^{2r}+\left(1-\frac{3}{4}\alpha\right)  |A^*|^{2r}     \right\} \hat{k}_{\lambda},\hat{k}_{\lambda} \right \rangle\\
		&\leq \frac{\alpha}{2} \textbf{ber}^r(A^2) +\textbf{ber}\left( \frac{\alpha}{4}   |A|^{2r}+\left(1-\frac{3}{4}\alpha\right)  |A^*|^{2r}  \right).
	\end{align*}
	Taking supremum over all $\lambda\in\Omega$, we get (\ref{eqn 3}). Replacing $A$ by $A^*$ in  (\ref{eqn 3}), we get  (\ref{eqn 4}). This compleates our proof.
	
\end{proof}

Considering  $r=1$ in (\ref{eqn 3}) and (\ref{eqn 4}) we get the following corollary.

\begin{cor} \label{cor2}
	Let $A \in \mathcal{B}(\mathcal{H}).$ Then 
	\begin{eqnarray}\label{bound2comb}
	\textbf{ber}^{2}(A)\leq \min \left \{\beta_1, \beta_2  \right \}, 
	\end{eqnarray}
	\mbox{where}
	\[ \beta_1 = \min_{0\leq \alpha \leq 1} \left[\frac{\alpha}{2} \textbf{ber}^r(A^2) +\textbf{ber}\left( \frac{\alpha}{4}|A|^{2r}+\left(1-\frac{3}{4}\alpha\right)|A^*|^{2r} \right)\right]\] and
	\[\beta_2 = \min_{0\leq \alpha \leq 1}\left[ \frac{\alpha}{2} \textbf{ber}^r(A^2) +\textbf{ber}\left( \left(1-\frac{3}{4} \alpha\right)  |A|^{2r}+\frac{\alpha}{4} |A^*|^{2r} \right)\right].\]
\end{cor}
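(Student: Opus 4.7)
The plan is to read this result off directly from Theorem \ref{theo3}, since the two inequalities (\ref{eqn 3}) and (\ref{eqn 4}) already contain all of the substantive analytic work; the corollary merely performs an optimization step in the free parameter $\alpha$. I would keep $r \geq 1$ arbitrary throughout so that the full $r$-dependence of $\beta_1$ and $\beta_2$ is preserved rather than prematurely specializing.

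Concretely, in inequality (\ref{eqn 3}) the left-hand side $\textbf{ber}^{2r}(A)$ is independent of $\alpha$, whereas the right-hand side is a continuous function of $\alpha$ on the compact interval $[0,1]$. Taking the infimum of the right-hand side over $\alpha \in [0,1]$ (which is attained) therefore preserves the inequality and produces
\[\textbf{ber}^{2r}(A) \;\leq\; \beta_1,\]
with $\beta_1$ as defined in the statement. Repeating the same argument with (\ref{eqn 4}) in place of (\ref{eqn 3}) — that is, minimizing over $\alpha$ the right-hand side in which the roles of $|A|^{2r}$ and $|A^*|^{2r}$ are interchanged — yields
\[\textbf{ber}^{2r}(A) \;\leq\; \beta_2.\]
Since these are two upper bounds on the same quantity, I would finally combine them by taking the minimum, obtaining $\textbf{ber}^{2r}(A) \leq \min\{\beta_1,\beta_2\}$, which matches the asserted inequality (understood with the natural left-hand side $\textbf{ber}^{2r}(A)$ consistent with the general $r$ appearing in the right-hand side).

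There is no genuine obstacle in the argument: the hard inequalities have already been established inside Theorem \ref{theo3}, and the remaining work is pure bookkeeping. The only subtlety worth flagging is to keep $r$ general throughout the passage to $\beta_1,\beta_2$, since the stated bounds retain the exponents $r$ and $2r$ on $\textbf{ber}(A^2)$, $|A|^{2r}$, and $|A^*|^{2r}$, and specializing to $r=1$ before minimizing over $\alpha$ would produce a strictly weaker corollary than what is claimed.
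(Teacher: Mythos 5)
Your proposal is correct and is essentially the paper's own argument: the corollary is obtained from Theorem \ref{theo3} simply by minimizing the right-hand sides of (\ref{eqn 3}) and (\ref{eqn 4}) over $\alpha\in[0,1]$ (the left-hand side being independent of $\alpha$) and then taking the smaller of the two bounds. You are also right to flag the $r$-mismatch: the paper derives the corollary by ``considering $r=1$'' yet leaves $r$ in the definitions of $\beta_1,\beta_2$, so the statement as printed is internally inconsistent, and your reading with $\textbf{ber}^{2r}(A)$ on the left for general $r\ge 1$ is the coherent version.
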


If we consider $\alpha=1$ in (\ref{eqn 3}) or (\ref{eqn 4}),  we get the following inequality.

\begin{cor}\label{theo3special}
	Let $A\in\mathcal{B}(\mathcal{H})$. Then 
	\[\textbf{ber}^{2r}(A)\leq \frac{1}{2}\textbf{ber}^{r}(A^2)+\frac{1}{4}\textbf{ber}(|A|^{2r}+|A^*|^{2r}),\]
	for all $r\geq 1$ and for all $\alpha\in [0,1].$	
\end{cor}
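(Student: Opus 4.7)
The plan is to obtain this corollary as an immediate specialization of Theorem \ref{theo3}. Specifically, I would set $\alpha = 1$ in either of the inequalities (\ref{eqn 3}) or (\ref{eqn 4}), since with $\alpha = 1$ both coefficients on $|A|^{2r}$ and $|A^*|^{2r}$ become symmetric ($\frac{1}{4}$ and $1 - \frac{3}{4} = \frac{1}{4}$), so the two inequalities collapse to the same statement.

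Concretely, starting from (\ref{eqn 3}) with $\alpha = 1$ gives
\[
\textbf{ber}^{2r}(A) \leq \frac{1}{2}\textbf{ber}^{r}(A^2) + \textbf{ber}\!\left(\tfrac{1}{4}|A|^{2r} + \tfrac{1}{4}|A^*|^{2r}\right).
\]
Then I would invoke the positive homogeneity of the Berezin number, namely $\textbf{ber}(cB) = c\,\textbf{ber}(B)$ for $c \geq 0$ (a direct consequence of the scaling property $\textbf{ber}(\alpha A) = |\alpha|\textbf{ber}(A)$ noted in the introduction), applied with $c = \tfrac{1}{4}$ and $B = |A|^{2r} + |A^*|^{2r}$. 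This factors out the $\tfrac{1}{4}$ and produces exactly the stated bound.

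There is essentially no obstacle here, since the work has already been done in the proof of Theorem \ref{theo3}; the only minor point worth flagging is that the phrase ``for all $\alpha \in [0,1]$'' in the statement is vacuous (no $\alpha$ appears in the final inequality) and should probably be removed, as the content is a single inequality parametrized only by $r \geq 1$.
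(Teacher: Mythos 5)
Your proposal is correct and is exactly the paper's argument: the authors obtain this corollary by setting $\alpha=1$ in (\ref{eqn 3}) or (\ref{eqn 4}) and factoring out the common coefficient $\tfrac14$ via homogeneity of $\textbf{ber}$. Your side remark that the phrase ``for all $\alpha\in[0,1]$'' is vacuous in the final statement is also a fair editorial observation.
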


\begin{remark}
	If $A^2=0$ then Corollary \ref{theo3special} gives, $\textbf{ber}^{2r}(A)\leq\frac{1}{4}\textbf{ber}(|A|^{2r}+|A^*|^{2r})$	for all $r\geq 1.$ This is stronger than the existing inequality $\textbf{ber}^{2r}(A)\leq\frac{1}{2}\textbf{ber}(|A|^{2r}+|A^*|^{2r})$, obtained in \cite[Cor. 3.5 (i)]{Trd}.
	
\end{remark}

\begin{theorem}\label{theo4}
	Let $A\in \mathcal{B}(\mathcal{H})$. Then for all $r\geq 1$ and for all $\alpha\in [0,1],$  we have
	\begin{eqnarray}\label{eqn 5}
	\textbf{ber}^{2r}(A) & \leq & \textbf{ber}\left( \alpha   \left(\frac{|A|+|A^*|}{2}\right)^{2r}+\left(1-\alpha\right)  |A^*|^{2r}  \right)
	\end{eqnarray}
and 
	\begin{eqnarray}\label{eqn 6}
	\textbf{ber}^{2r}(A) & \leq & \textbf{ber}\left( \alpha   \left(\frac{|A|+|A^*|}{2}\right)^{2r}+\left(1-\alpha\right)  |A|^{2r}  \right).
	\end{eqnarray}
\end{theorem}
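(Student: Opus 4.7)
The plan is to follow the same template as the proofs of Theorem \ref{theo1} and Theorem \ref{theo3}: split $|\langle A\hat{k}_{\lambda},\hat{k}_{\lambda}\rangle|$ as a convex combination of itself, bound the two copies by two different quantities, then raise to the $2r$-th power using the convexity of $t \mapsto t^{2r}$ on $[0,\infty)$, and finally push powers inside inner products via Lemma \ref{lemma2}. The novelty here is to route the first copy through Lemma \ref{lemma1} combined with the AM--GM inequality so that the mixed bound becomes the positive operator $\frac{|A|+|A^{*}|}{2}$.

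Concretely, fix a normalised reproducing kernel $\hat{k}_{\lambda}$ and $\alpha \in [0,1]$. First write
\[
|\langle A\hat{k}_{\lambda},\hat{k}_{\lambda}\rangle| = \alpha|\langle A\hat{k}_{\lambda},\hat{k}_{\lambda}\rangle| + (1-\alpha)|\langle A\hat{k}_{\lambda},\hat{k}_{\lambda}\rangle|.
\]
For the first piece, apply Lemma \ref{lemma1} followed by AM--GM to get
\[
|\langle A\hat{k}_{\lambda},\hat{k}_{\lambda}\rangle| \leq \langle |A|\hat{k}_{\lambda},\hat{k}_{\lambda}\rangle^{1/2}\langle |A^{*}|\hat{k}_{\lambda},\hat{k}_{\lambda}\rangle^{1/2} \leq \left\langle \tfrac{|A|+|A^{*}|}{2}\hat{k}_{\lambda},\hat{k}_{\lambda}\right\rangle.
\]
For the second piece, use the Cauchy--Schwarz inequality in the form $|\langle A\hat{k}_{\lambda},\hat{k}_{\lambda}\rangle| \leq \|A^{*}\hat{k}_{\lambda}\|$.

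Now raise both sides of the resulting estimate to the power $2r$. The convexity of $t \mapsto t^{2r}$ (for $r \geq 1$) and the fact that the two majorants are themselves nonnegative gives
\[
|\langle A\hat{k}_{\lambda},\hat{k}_{\lambda}\rangle|^{2r} \leq \alpha \left\langle \tfrac{|A|+|A^{*}|}{2}\hat{k}_{\lambda},\hat{k}_{\lambda}\right\rangle^{2r} + (1-\alpha)\langle |A^{*}|^{2}\hat{k}_{\lambda},\hat{k}_{\lambda}\rangle^{r}.
\]
Applying Lemma \ref{lemma2} to the positive operators $\frac{|A|+|A^{*}|}{2}$ (with exponent $2r$) and $|A^{*}|^{2}$ (with exponent $r$), this is bounded above by
\[
\left\langle \left\{ \alpha\left(\tfrac{|A|+|A^{*}|}{2}\right)^{2r} + (1-\alpha)|A^{*}|^{2r}\right\}\hat{k}_{\lambda},\hat{k}_{\lambda}\right\rangle.
\]
Taking the supremum over $\lambda \in \Omega$ yields (\ref{eqn 5}); replacing $A$ by $A^{*}$ (noting that $|A^{*}|$ and $|A|$ swap roles while $\frac{|A|+|A^{*}|}{2}$ is invariant) produces (\ref{eqn 6}).

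There is no real obstacle here: the argument is a direct variant of the one used for Theorem \ref{theo1}, with the only subtle point being the verification that the AM--GM step is consistent with the convexity step, i.e.\ that applying $t^{2r}$ preserves the majorisation. This is immediate since both sides of the preliminary bound are nonnegative real numbers and $t^{2r}$ is monotone on $[0,\infty)$.
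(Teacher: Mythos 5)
Your proposal is correct and follows essentially the same route as the paper: the same convex splitting of $|\langle A\hat{k}_{\lambda},\hat{k}_{\lambda}\rangle|$, the same use of Lemma \ref{lemma1} with AM--GM to produce $\frac{|A|+|A^*|}{2}$, Cauchy--Schwarz for the other piece, and Lemma \ref{lemma2} to push the powers inside. The only cosmetic difference is that you apply Lemma \ref{lemma1} and AM--GM before raising to the power $2r$ rather than after, which by monotonicity of $t\mapsto t^{2r}$ on $[0,\infty)$ yields the identical chain of estimates.
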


\begin{proof}
	Let $\hat{k}_{\lambda}$ be a normalised reproducing kernel of $\mathcal{H}.$ Then using Cauchy-Schwarz inequality, we get 
	\begin{eqnarray*}
		|\langle A\hat{k}_{\lambda},\hat{k}_{\lambda}\rangle|& = & \alpha |\langle A\hat{k}_{\lambda},\hat{k}_{\lambda}\rangle|+(1-\alpha)|\langle A\hat{k}_{\lambda},\hat{k}_{\lambda}\rangle|\\
		& & \leq \alpha |\langle A\hat{k}_{\lambda},\hat{k}_{\lambda}\rangle|+(1-\alpha)\|A^*\hat{k}_{\lambda}\|
		\end{eqnarray*}
	By convexity of  $f(t)=t^{2r}~(r \geq1)$, we get
	\begin{align*}
		&|\langle A\hat{k}_{\lambda},\hat{k}_{\lambda}\rangle|^{2r}\\&\leq \alpha |\langle A\hat{k}_{\lambda},\hat{k}_{\lambda}\rangle|^{2r}+(1-\alpha)\|A^*x\|^{2r}\\ 
		&\leq \alpha |\langle A\hat{k}_{\lambda},\hat{k}_{\lambda}\rangle|^{2r}+(1-\alpha)\langle |A^*|^{2r}\hat{k}_{\lambda},\hat{k}_{\lambda}\rangle, ~~\mbox{by Lemma \ref{lemma2}}\\
		&\leq \alpha \left(\langle |A|\hat{k}_{\lambda},\hat{k}_{\lambda}\rangle^{1/2}\langle |A^*|\hat{k}_{\lambda},\hat{k}_{\lambda}\rangle^{1/2}\right)^{2r}+(1-\alpha)\langle |A^*|^{2r}\hat{k}_{\lambda},\hat{k}_{\lambda}\rangle,\\ &\,\,\,\,\,\,\,\,\,\,\,\,\,\,\,\,\,\,\,\,\,\,\,\,\,\,\,\,\,\,\,\,\,\,\,\,\,\,\,\,\,\,\,\,\,\,\,\,\,\,\,\,\,\,\,\,\,\,\,\,\,\,\,\,\,\,\,\,\,\,\,\,\,\,\,\,\,\,\,\,\,\,\,\,\,\,\,\,\,\,\,\,\,\,\,\,\,\,\,\,\,\,\,\,\,\,\,\,\,\,~~\mbox{by Lemma \ref{lemma1}}\\
		&\leq \alpha \left(\frac{\langle |A|\hat{k}_{\lambda},\hat{k}_{\lambda}\rangle+\langle |A^*|\hat{k}_{\lambda},\hat{k}_{\lambda}\rangle}{2} \right)^{2r}+(1-\alpha)\langle |A^*|^{2r}\hat{k}_{\lambda},\hat{k}_{\lambda}\rangle \\
		&= \alpha \left(\frac{\langle (|A|+ |A^*|)\hat{k}_{\lambda},\hat{k}_{\lambda}\rangle}{2} \right)^{2r}+(1-\alpha)\langle |A^*|^{2r}\hat{k}_{\lambda},\hat{k}_{\lambda}\rangle \\
		&\leq \alpha \left \langle \left(\frac{|A|+ |A^*|}{2}\right)^{2r}\hat{k}_{\lambda},\hat{k}_{\lambda} \right \rangle +(1-\alpha)\langle |A^*|^{2r}\hat{k}_{\lambda},\hat{k}_{\lambda}\rangle, ~~\mbox{by Lemma \ref{lemma2}}\\
		&= \left \langle \left\{   \alpha   \left(\frac{|A|+|A^*|}{2}\right)^{2r}+\left(1-\alpha\right)  |A^*|^{2r}   \right\}\hat{k}_{\lambda}, \hat{k}_{\lambda} \right \rangle\\
		& \leq  \textbf{ber}\left(\alpha   \left(\frac{|A|+|A^*|}{2}\right)^{2r}+\left(1-\alpha\right)  |A^*|^{2r}  \right).
	\end{align*}
	Taking supremum over all $\lambda\in\Omega$, we get  (\ref{eqn 5}). Replacing $A$ by $A^*$ in  (\ref{eqn 5}), we get  (\ref{eqn 6}).
	\end{proof}

Considering $r=1$ in (\ref{eqn 5}) and (\ref{eqn 6}) we get the following corollary.

\begin{cor}\label{cor 3}
	Let $A\in \mathcal{B}(\mathcal{H})$. Then  
	\begin{eqnarray}\label{bound3comb}
	\textbf{ber}^{2}(A)&\leq &\min \{ \gamma_1, \gamma_2\},
	\end{eqnarray}
	$$ \gamma_1 = \min_{0\leq \alpha \leq 1} \textbf{ber}\left( \alpha   \left(\frac{|A|+|A^*|}{2}\right)^{2}+\left(1-\alpha\right)  |A^*|^{2}  \right) $$ 
	and
	$$ \gamma_2 = \min_{0\leq \alpha \leq 1} \textbf{ber}\left( \alpha   \left(\frac{|A|+|A^*|}{2}\right)^{2}+\left(1-\alpha\right)  |A|^{2}  \right).$$
\end{cor}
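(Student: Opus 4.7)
The plan is to derive this corollary as a direct specialization of Theorem \ref{theo4}. Setting $r=1$ in inequality (\ref{eqn 5}) yields
\[
\textbf{ber}^{2}(A) \leq \textbf{ber}\left( \alpha \left(\frac{|A|+|A^*|}{2}\right)^{2}+(1-\alpha)|A^*|^{2} \right)
\]
for every $\alpha\in[0,1]$. The left-hand side does not depend on $\alpha$, so I can take the infimum of the right-hand side over $\alpha\in[0,1]$ while preserving the inequality, which gives $\textbf{ber}^{2}(A)\leq \gamma_1$. Running the same specialization on inequality (\ref{eqn 6}) produces, in exactly the same way, $\textbf{ber}^{2}(A)\leq \gamma_2$. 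Since both upper bounds hold simultaneously, I can combine them as $\textbf{ber}^{2}(A)\leq \min\{\gamma_1,\gamma_2\}$, which is the claimed inequality (\ref{bound3comb}).

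A small remark I would include: the use of ``$\min$'' rather than ``$\inf$'' in the definitions of $\gamma_1$ and $\gamma_2$ is justified because the maps $\alpha\mapsto \textbf{ber}\bigl(\alpha X+(1-\alpha)Y\bigr)$ are continuous on the compact interval $[0,1]$ (in fact convex, by the subadditivity and positive homogeneity of \textbf{ber}), so the infimum is attained. Since the corollary is a routine specialization of Theorem \ref{theo4}, I do not anticipate any genuine obstacle; the only thing to be careful about is to record that the two inequalities (\ref{eqn 5}) and (\ref{eqn 6}) are not symmetric in $|A|$ and $|A^*|$, which is precisely why one obtains two distinct bounds $\gamma_1,\gamma_2$ rather than a single one, and why taking the minimum of the two is a genuinely stronger statement than either one individually.
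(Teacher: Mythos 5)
Your proposal is correct and matches the paper's own derivation exactly: the corollary is obtained by setting $r=1$ in inequalities (\ref{eqn 5}) and (\ref{eqn 6}) of Theorem \ref{theo4} and minimizing over $\alpha\in[0,1]$. Your added remark justifying the use of $\min$ rather than $\inf$ via continuity on the compact interval $[0,1]$ is a sensible touch that the paper leaves implicit.
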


 \begin{theorem}\label{theo5}
	Let $A\in \mathcal{B}(\mathcal{H})$. Then for all $r\geq1, $
	\[\textbf{ber}^{2r}(A)\leq \frac{1}{4}\textbf{ber} \left ( |A|^{2r}+|A^*|^{2r} \right)+\frac{1}{2}\textbf{ber} \left(|A|^r|A^*|^r\right),  \]
\end{theorem}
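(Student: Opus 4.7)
The plan is to follow the template used in Theorems \ref{theo1}--\ref{theo4}: fix a normalised reproducing kernel $\hat{k}_\lambda$, obtain a pointwise estimate for $|\langle A\hat{k}_\lambda,\hat{k}_\lambda\rangle|^{2r}$ by the quantity on the right, and then take the supremum over $\lambda\in\Omega$. Unlike the previous theorems, no convex combination is introduced at the outset; the whole estimate comes from stacking Halmos, McCarthy (\text{Lemma }\ref{lemma2}) and Buzano.

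First I would apply \text{Lemma }\ref{lemma1} to get
\[
|\langle A\hat{k}_\lambda,\hat{k}_\lambda\rangle|^{2r}\le \langle |A|\hat{k}_\lambda,\hat{k}_\lambda\rangle^{r}\,\langle |A^*|\hat{k}_\lambda,\hat{k}_\lambda\rangle^{r},
\]
and then use \text{Lemma }\ref{lemma2} on each factor (with exponent $r\ge1$) to upgrade the exponents inside the inner products:
\[
|\langle A\hat{k}_\lambda,\hat{k}_\lambda\rangle|^{2r}\le \langle |A|^{r}\hat{k}_\lambda,\hat{k}_\lambda\rangle\,\langle |A^*|^{r}\hat{k}_\lambda,\hat{k}_\lambda\rangle .
\]
This is the starting configuration that Buzano's inequality is designed to exploit.

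Next, I would set $x=|A|^{r}\hat{k}_\lambda$, $y=|A^*|^{r}\hat{k}_\lambda$, $e=\hat{k}_\lambda$, and invoke \text{Lemma }\ref{lem3}. Since $|A|^{r}$ and $|A^*|^{r}$ are positive, the two scalars $\langle x,e\rangle$ and $\langle e,y\rangle$ are real and nonnegative, so the left side of Buzano is exactly the product appearing above. For the right side, $\|x\|\|y\|\le \tfrac12(\|x\|^2+\|y\|^2)=\tfrac12\langle(|A|^{2r}+|A^*|^{2r})\hat{k}_\lambda,\hat{k}_\lambda\rangle$ by AM--GM, while $\langle x,y\rangle=\langle |A^*|^{r}|A|^{r}\hat{k}_\lambda,\hat{k}_\lambda\rangle$. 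Assembling these gives
\[
|\langle A\hat{k}_\lambda,\hat{k}_\lambda\rangle|^{2r}\le \tfrac14\langle(|A|^{2r}+|A^*|^{2r})\hat{k}_\lambda,\hat{k}_\lambda\rangle+\tfrac12|\langle |A^*|^{r}|A|^{r}\hat{k}_\lambda,\hat{k}_\lambda\rangle|.
\]

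Finally I would take the supremum over $\lambda\in\Omega$ and use the easy identity $\textbf{ber}(B^*)=\textbf{ber}(B)$ to replace $\textbf{ber}(|A^*|^{r}|A|^{r})$ with $\textbf{ber}(|A|^{r}|A^*|^{r})$, yielding the stated inequality. I do not anticipate a real obstacle here; the only subtle point is that $|A|^{r}|A^*|^{r}$ is in general not self-adjoint, but the Berezin number is invariant under taking adjoints, so that converts the $|A^*|^{r}|A|^{r}$ produced by Buzano into the $|A|^{r}|A^*|^{r}$ appearing in the theorem.
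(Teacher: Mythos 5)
Your proposal is correct and follows essentially the same route as the paper: Lemma \ref{lemma1} combined with Lemma \ref{lemma2} to reach $\langle |A|^{r}\hat{k}_{\lambda},\hat{k}_{\lambda}\rangle\langle |A^{*}|^{r}\hat{k}_{\lambda},\hat{k}_{\lambda}\rangle$, then Buzano's inequality (Lemma \ref{lem3}) with $e=\hat{k}_{\lambda}$, AM--GM on the norm product, and a supremum over $\lambda$. The only cosmetic difference is your choice of the roles of $x$ and $y$ in Buzano, which you correctly reconcile via the adjoint-invariance of the Berezin number; the paper handles the same point by moving $|A|^{r}$ across the inner product directly.
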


\begin{proof}
	Let $\hat{k}_{\lambda}$ be a normalised reproducing kernel of $\mathcal{H}.$ Then  if follows from Lemma \ref{lemma1} and Lemma \ref{lemma2}  that  \[|\langle A\hat{k}_{\lambda},\hat{k}_{\lambda}\rangle |^{2r} \leq \langle |A|^r\hat{k}_{\lambda},\hat{k}_{\lambda}\rangle \langle |A^*|^r\hat{k}_{\lambda},\hat{k}_{\lambda}\rangle=\langle |A^*|^r\hat{k}_{\lambda}, \hat{k}_{\lambda}\rangle\langle\hat{k}_{\lambda},|A|^r\hat{k}_{\lambda}\rangle.\]
	From Lemma \ref{lem3} we have,
	\begin{eqnarray*}
		\langle |A^*|^r\hat{k}_{\lambda},\hat{k}_{\lambda}\rangle \langle \hat{k}_{\lambda},|A|^r\hat{k}_{\lambda}\rangle
		&\leq& \frac{1}{2}\left\| |A|^r\hat{k}_{\lambda}\right\| \left\| |A^*|^r\hat{k}_{\lambda}\right\| +\frac{1}{2} \left | \langle |A^*|^r\hat{k}_{\lambda},|A|^r\hat{k}_{\lambda}\rangle \right |.
	\end{eqnarray*}
	\begin{align*}
		\mbox{Hence,}&\ |\langle A\hat{k}_{\lambda},\hat{k}_{\lambda}\rangle |^{2r}\\&\leq \frac{1}{4}\left (\left\| |A|^r\hat{k}_{\lambda}\right\|^2+ \left\| |A^*|^r\hat{k}_{\lambda}\right\|^2 \right) +\frac{1}{2} \left | \langle |A|^r|A^*|^r\hat{k}_{\lambda},\hat{k}_{\lambda}\rangle \right |\\
		&= \frac{1}{4}\left (\left \langle  |A|^{2r}\hat{k}_{\lambda},\hat{k}_{\lambda}\right \rangle + \left \langle  |A^*|^{2r}\hat{k}_{\lambda},\hat{k}_{\lambda}\right \rangle  \right) +\frac{1}{2} \left | \langle |A|^r|A^*|^r\hat{k}_{\lambda},\hat{k}_{\lambda}\rangle \right |\\
		&= \frac{1}{4} \left \langle \left ( |A|^{2r}+  |A^*|^{2r} \right) \hat{k}_{\lambda},\hat{k}_{\lambda}\right \rangle   +\frac{1}{2} \left | \langle |A|^r|A^*|^r\hat{k}_{\lambda},\hat{k}_{\lambda}\rangle \right |\\
		&\leq \frac{1}{4}\textbf{ber} \left ( |A|^{2r}+|A^*|^{2r} \right)+\frac{1}{2}\textbf{ber} \left(|A|^r|A^*|^r\right)
	\end{align*}
	Taking supremum over all $\lambda\in\Omega$, we get
	\begin{eqnarray*}
    \textbf{ber}^{2r}(A)\leq \frac{1}{4}\textbf{ber} \left ( |A|^{2r}+|A^*|^{2r} \right)+\frac{1}{2}\textbf{ber} \left(|A|^r|A^*|^r\right).
	\end{eqnarray*}
	
\end{proof}

Considering $r=1$ in Theorem \ref{theo5} we get the following corollary.

\begin{cor}
	Let $A\in\mathcal{B}(\mathcal{H})$. Then 
	\begin{eqnarray*}
		\textbf{ber}^{2}(A)\leq \frac{1}{4}\textbf{ber} \left ( |A|^{2}+|A^*|^{2} \right)+\frac{1}{2}\textbf{ber} \left(|A||A^*|\right). 
	\end{eqnarray*}
	
\end{cor}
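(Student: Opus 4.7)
My plan is to start from the mixed Schwarz inequality (Lemma \ref{lemma1}) applied to the reproducing kernel, which gives
\[|\langle A\hat{k}_{\lambda},\hat{k}_{\lambda}\rangle|^{2r} \leq \langle |A|\hat{k}_{\lambda},\hat{k}_{\lambda}\rangle^{r}\langle |A^*|\hat{k}_{\lambda},\hat{k}_{\lambda}\rangle^{r},\]
and then to pull the $r$-th powers inside each bracket via Lemma \ref{lemma2} (both $|A|$ and $|A^*|$ are positive), obtaining the product $\langle |A|^{r}\hat{k}_{\lambda},\hat{k}_{\lambda}\rangle\langle |A^*|^{r}\hat{k}_{\lambda},\hat{k}_{\lambda}\rangle$. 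The whole point of this reduction is that we now have a product of two inner products with the \emph{same} unit vector $\hat{k}_{\lambda}$ in the second slot, which is exactly the form to which Buzano's inequality applies.

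The key step is to rewrite the product as $\langle |A^*|^{r}\hat{k}_{\lambda},\hat{k}_{\lambda}\rangle\langle \hat{k}_{\lambda},|A|^{r}\hat{k}_{\lambda}\rangle$ (using self-adjointness of $|A|^{r}$) and invoke Lemma \ref{lem3} with $x=|A^*|^{r}\hat{k}_{\lambda}$, $y=|A|^{r}\hat{k}_{\lambda}$, $e=\hat{k}_{\lambda}$. This yields
\[\langle |A|^{r}\hat{k}_{\lambda},\hat{k}_{\lambda}\rangle\langle |A^*|^{r}\hat{k}_{\lambda},\hat{k}_{\lambda}\rangle \leq \tfrac{1}{2}\||A|^{r}\hat{k}_{\lambda}\|\,\||A^*|^{r}\hat{k}_{\lambda}\| + \tfrac{1}{2}|\langle |A|^{r}|A^*|^{r}\hat{k}_{\lambda},\hat{k}_{\lambda}\rangle|.\]

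To finish, I would apply the AM–GM inequality to the first product of norms, giving $\tfrac{1}{2}(\||A|^{r}\hat{k}_{\lambda}\|^{2}+\||A^*|^{r}\hat{k}_{\lambda}\|^{2})$, and then recognise $\||A|^{r}\hat{k}_{\lambda}\|^{2}=\langle |A|^{2r}\hat{k}_{\lambda},\hat{k}_{\lambda}\rangle$ and similarly for $|A^*|^{r}$. Combining both bounds and taking the supremum over $\lambda\in\Omega$ produces the claimed inequality, since the sum $\langle (|A|^{2r}+|A^*|^{2r})\hat{k}_{\lambda},\hat{k}_{\lambda}\rangle$ is bounded by $\textbf{ber}(|A|^{2r}+|A^*|^{2r})$ and $|\langle |A|^{r}|A^*|^{r}\hat{k}_{\lambda},\hat{k}_{\lambda}\rangle|$ is bounded by $\textbf{ber}(|A|^{r}|A^*|^{r})$.

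The main obstacle, though minor, is keeping track of the factor of $1/2$ carefully at two separate places (Buzano and AM–GM) so that the coefficients $1/4$ and $1/2$ in front of the two Berezin terms come out correctly; also one must remember that $|A|^{r}|A^*|^{r}$ is generally not self-adjoint, so we must keep the modulus $|\langle\,\cdot\,\rangle|$ throughout rather than trying to drop it. No extra ingredients beyond Lemmas \ref{lemma1}, \ref{lemma2}, and \ref{lem3} are needed.
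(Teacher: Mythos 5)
Your argument is correct and is essentially identical to the paper's: the corollary is obtained there by setting $r=1$ in Theorem \ref{theo5}, whose proof uses exactly your chain of Lemma \ref{lemma1}, Lemma \ref{lemma2}, Buzano's inequality (Lemma \ref{lem3}), and AM--GM, with the same bookkeeping of the factors $\tfrac14$ and $\tfrac12$. No discrepancies to report.
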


Next we generalize Theorem \ref{theo5} in the following form.

\begin{theorem}\label{theo5gen}
	Let $A_i \in \mathcal{B}(\mathcal{H}), \ i=1,2, \ldots,n$.  Then for all $r\geq 1,$
	\begin{eqnarray*}
	\textbf{ber}^{2r}\left( \sum_{i=1}^{n}A_i \right)  
	& \leq & \frac{n^{2r-1}}{4}\textbf{ber} \left ( \sum_{i=1}^{n}  \left( |A_i|^{2r}+ |A_i^*|^{2r} \right)      \right)         +\frac{n^{2r-1}}{2} \left( \sum_{i=1}^{n}\textbf{ber} \left( |A_i|^{r} |A_i^*|^r \right)      \right).
    \end{eqnarray*}
\end{theorem}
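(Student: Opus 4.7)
The plan is to reduce to the single-operator case (Theorem \ref{theo5}) by first pulling the sum outside via the triangle inequality and then using Lemma \ref{lem4} to convert $\left(\sum a_i\right)^{2r}$ into $n^{2r-1}\sum a_i^{2r}$, which is exactly where the combinatorial factor $n^{2r-1}$ comes from.

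Concretely, I would fix a normalised reproducing kernel $\hat{k}_{\lambda}$ and write
\[
\left|\left\langle \Bigl(\sum_{i=1}^{n}A_i\Bigr)\hat{k}_{\lambda},\hat{k}_{\lambda}\right\rangle\right|
\;\leq\; \sum_{i=1}^{n}\bigl|\langle A_i\hat{k}_{\lambda},\hat{k}_{\lambda}\rangle\bigr|.
\]
Raising to the $2r$-th power and invoking Lemma \ref{lem4} with the positive reals $a_i=|\langle A_i\hat{k}_{\lambda},\hat{k}_{\lambda}\rangle|$ yields
\[
\left|\left\langle \Bigl(\sum_{i=1}^{n}A_i\Bigr)\hat{k}_{\lambda},\hat{k}_{\lambda}\right\rangle\right|^{2r}
\;\leq\; n^{2r-1}\sum_{i=1}^{n}\bigl|\langle A_i\hat{k}_{\lambda},\hat{k}_{\lambda}\rangle\bigr|^{2r}.
\]

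Next I would apply the scalar estimate established in the course of proving Theorem \ref{theo5}, namely
\[
|\langle A_i\hat{k}_{\lambda},\hat{k}_{\lambda}\rangle|^{2r}
\;\leq\; \tfrac{1}{4}\bigl\langle(|A_i|^{2r}+|A_i^*|^{2r})\hat{k}_{\lambda},\hat{k}_{\lambda}\bigr\rangle
+\tfrac{1}{2}\bigl|\langle |A_i|^{r}|A_i^*|^{r}\hat{k}_{\lambda},\hat{k}_{\lambda}\rangle\bigr|,
\]
to each summand. Summing over $i$ and using linearity of the inner product in the first term gives
\[
\left|\left\langle \Bigl(\sum_{i=1}^{n}A_i\Bigr)\hat{k}_{\lambda},\hat{k}_{\lambda}\right\rangle\right|^{2r}
\leq \tfrac{n^{2r-1}}{4}\Bigl\langle\sum_{i=1}^{n}(|A_i|^{2r}+|A_i^*|^{2r})\hat{k}_{\lambda},\hat{k}_{\lambda}\Bigr\rangle
+\tfrac{n^{2r-1}}{2}\sum_{i=1}^{n}\bigl|\langle |A_i|^{r}|A_i^*|^{r}\hat{k}_{\lambda},\hat{k}_{\lambda}\rangle\bigr|.
\]

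Finally I would take the supremum over $\lambda\in\Omega$. On the left we obtain $\textbf{ber}^{2r}(\sum_{i=1}^n A_i)$. For the first term on the right, the supremum is exactly $\textbf{ber}(\sum_{i=1}^{n}(|A_i|^{2r}+|A_i^*|^{2r}))$. For the second, we use the trivial inequality $\sup_{\lambda}\sum_i f_i(\lambda)\leq \sum_i\sup_\lambda f_i(\lambda)$ to bound it by $\sum_{i=1}^{n}\textbf{ber}(|A_i|^{r}|A_i^*|^{r})$, completing the proof. The only real subtlety is the proper use of Lemma \ref{lem4}, which forces the factor $n^{2r-1}$ to appear in front of both terms; the rest is a direct termwise application of the proof of Theorem \ref{theo5}.
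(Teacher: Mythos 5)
Your proposal is correct and follows essentially the same route as the paper: the triangle inequality followed by Lemma \ref{lem4} to produce the factor $n^{2r-1}$, and then the pointwise estimate from the proof of Theorem \ref{theo5} applied termwise before taking the supremum over $\lambda\in\Omega$. The paper compresses the second half into the phrase ``proceeding similarly as in the proof of Theorem \ref{theo5}''; you have simply written out those details explicitly and accurately.
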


\begin{proof}
	Let $\hat{k}_{\lambda}$ be a normalised reproducing kernel of $\mathcal{H}.$ From  Lemma \ref{lem4} we get, 
	\begin{eqnarray*}
		\left |\left \langle \left( \sum_{i=1}^{n}A_i \right)\hat{k}_{\lambda},\hat{k}_{\lambda} \right \rangle \right |^{2r} = \left |\sum_{i=1}^{n} \left \langle  A_i \hat{k}_{\lambda},\hat{k}_{\lambda} \right \rangle \right |^{2r}
		&\leq&  \left (\sum_{i=1}^{n} |\left \langle  A_i \hat{k}_{\lambda},\hat{k}_{\lambda} \right \rangle| \right )^{2r}\\
		&\leq&  n^{2r-1} \left (\sum_{i=1}^{n} |\left \langle  A_i \hat{k}_{\lambda},\hat{k}_{\lambda} \right \rangle|^{2r} \right ).
	\end{eqnarray*}
	 %\begin{align*}
		%&\left |\left \langle \left( \sum_{i=1}^{n}A_i \right)\hat{k}_{\lambda},\hat{k}_{\lambda} \right \rangle \right |^{2r}\\& = \left |\sum_{i=1}^{n} \left \langle  A_i \hat{k}_{\lambda},\hat{k}_{\lambda} \right \rangle \right |^{2r}\\
		%&\leq  \left (\sum_{i=1}^{n} |\left \langle  A_i \hat{k}_{\lambda},\hat{k}_{\lambda} \right \rangle| \right )^{2r}\\
		%&\leq  n^{2r-1} \left (\sum_{i=1}^{n} |\left \langle  A_i \hat{k}_{\lambda},\hat{k}_{\lambda} \right \rangle|^{2r} \right ).
    %\end{align*}
	Proceeding similarly  as in the proof of Theorem \ref{theo5} we get the required inequality.
\end{proof}

\begin{theorem}\label{theo6}
	Let $A_i,B_i, X_i\in \mathcal{B}(\mathcal{H}), i=1,2, \ldots,n$. Let $f$ and $g$ be two nonnegative functions on $[0,\infty)$ which are continuous and satisfy the relation $f(t)g(t)=t$ for all $t\in [0,\infty).$ Then for $\nu\in [0,1]$ and for all $r\geq 2 \ max\{\nu,1-\nu\},$ 
\begin{eqnarray*}
    & & \textbf{ber}^r\left( \sum_{i=1}^{n}A_i^*X_iB_i \right) \leq \\  
    && 
	\sqrt{2}n^{r-1}\textbf{ber}\left(  \sum_{i=1}^{n}\left(\nu \left[B_i^*f^2(|X_i|)B_i\right]^{\frac{r}{2\nu}}+{\rm i}(1-\nu)\left[A_i^*g^2(|X_i^*|)A_i\right]^{\frac{r}{2(1-\nu)}}\right) \right).
\end{eqnarray*}
\end{theorem}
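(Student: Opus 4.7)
The plan is to follow the template of the earlier theorems in this section, but summed term by term and combined at the end via an imaginary-unit trick that will account for the $\sqrt{2}$ factor. Fix a normalized reproducing kernel $\hat k_\lambda$. Writing
\[
\left|\left\langle \sum_{i=1}^n A_i^* X_i B_i \hat k_\lambda, \hat k_\lambda \right\rangle\right| \;\leq\; \sum_{i=1}^n |\langle X_i B_i \hat k_\lambda, A_i \hat k_\lambda\rangle|,
\]
and applying Lemma \ref{lem1} to each summand bounds the right-hand side by $\sum_i \|f(|X_i|) B_i \hat k_\lambda\|\,\|g(|X_i^*|) A_i \hat k_\lambda\|$. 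Raising to the $r$-th power and invoking Lemma \ref{lem4} then reduces the task to bounding each term $\bigl(\|f(|X_i|) B_i \hat k_\lambda\|\,\|g(|X_i^*|) A_i \hat k_\lambda\|\bigr)^r$, with an extra factor of $n^{r-1}$ out front.

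To handle a single such term, I would rewrite it as $\langle B_i^* f^2(|X_i|) B_i \hat k_\lambda, \hat k_\lambda\rangle^{r/2}\,\langle A_i^* g^2(|X_i^*|) A_i \hat k_\lambda, \hat k_\lambda\rangle^{r/2}$ and apply Young's inequality $st \leq \nu s^{1/\nu}+(1-\nu)t^{1/(1-\nu)}$. This produces powers $r/(2\nu)$ and $r/(2(1-\nu))$; the hypothesis $r \geq 2\max\{\nu,1-\nu\}$ guarantees that both are at least $1$, so Lemma \ref{lemma2} can be used to push each exponent inside the inner product, delivering
\[
\nu\,\langle [B_i^* f^2(|X_i|) B_i]^{r/(2\nu)}\hat k_\lambda, \hat k_\lambda\rangle + (1-\nu)\,\langle [A_i^* g^2(|X_i^*|) A_i]^{r/(2(1-\nu))}\hat k_\lambda, \hat k_\lambda\rangle.
\]

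Summing in $i$ gives two nonnegative scalars $P = \langle U \hat k_\lambda, \hat k_\lambda\rangle$ and $Q = \langle V \hat k_\lambda, \hat k_\lambda\rangle$, where $U := \sum_i \nu\,[B_i^* f^2(|X_i|) B_i]^{r/(2\nu)}$ and $V := \sum_i (1-\nu)\,[A_i^* g^2(|X_i^*|) A_i]^{r/(2(1-\nu))}$ are positive. The decisive step is then the elementary bound $P + Q \leq \sqrt{2}\sqrt{P^2 + Q^2}$; since $U, V \geq 0$, the right-hand side equals $\sqrt{2}\,|\langle (U + {\rm i}V)\hat k_\lambda, \hat k_\lambda\rangle|$, because $P$ and $Q$ are precisely the real and imaginary parts of $\langle (U + {\rm i}V)\hat k_\lambda, \hat k_\lambda\rangle$. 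Recognizing that $U + {\rm i}V$ is exactly the operator inside the Berezin number on the right side of the claim, and then taking the supremum over $\lambda$, yields the stated inequality.

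The main obstacle I anticipate is spotting this final repackaging: the $\sqrt{2}$ factor and the imaginary unit are not natural from the earlier parts of the argument and only appear once one tries to combine two essentially separate positive-operator bounds into a single Berezin symbol of a non-selfadjoint operator. Once this move is identified, the remaining bookkeeping---verifying the exponent conditions $r/(2\nu), r/(2(1-\nu)) \geq 1$ for Lemma \ref{lemma2}, the degenerate cases $\nu \in \{0,1\}$, and commuting finite sums with the inner product---is routine.
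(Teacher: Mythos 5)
Your proposal is correct and follows essentially the same route as the paper's proof: triangle inequality plus Lemma \ref{lem4}, Kittaneh's Lemma \ref{lem1}, the McCarthy inequality (Lemma \ref{lemma2}) combined with Young's inequality to produce the exponents $r/(2\nu)$ and $r/(2(1-\nu))$, and finally the bound $a+b\leq\sqrt{2}\,|a+{\rm i}b|$ to package the two positive terms into the Berezin symbol of $U+{\rm i}V$. The only difference is that you apply Young's inequality before pushing the exponents inside via Lemma \ref{lemma2}, whereas the paper does these two steps in the opposite order; this is immaterial.
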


\begin{proof}
	Let $\hat{k}_{\lambda}$ be a normalised reproducing kernel of $\mathcal{H}.$ Then we have,
	\begin{eqnarray*}
		&&\left |\left \langle \left( \sum_{i=1}^{n}A_i^*X_iB_i \right)\hat{k}_{\lambda},\hat{k}_{\lambda} \right \rangle \right |^r \\
		&=&\left |\sum_{i=1}^{n} \left \langle  A_i^*X_iB_i \hat{k}_{\lambda},\hat{k}_{\lambda} \right \rangle \right |^r\\
		&\leq& \left (\sum_{i=1}^{n} |\left \langle  A_i^*X_iB_i \hat{k}_{\lambda},\hat{k}_{\lambda} \right \rangle| \right )^r\\
		&\leq &n^{r-1} \left (\sum_{i=1}^{n} |\left \langle  A_i^*X_iB_i \hat{k}_{\lambda},\hat{k}_{\lambda} \right \rangle|^r \right ), ~~\mbox{by Lemma \ref{lem4}}\\
		&= &n^{r-1} \left (\sum_{i=1}^{n} |\left \langle  X_iB_i \hat{k}_{\lambda},A_i\hat{k}_{\lambda} \right \rangle|^r \right )\\
		& \leq &n^{r-1} \left (\sum_{i=1}^{n} \left \Vert  f(|X_i|)B_i \hat{k}_{\lambda} \right \Vert^r \left \Vert g(|X_i^*|)A_i \hat{k}_{\lambda} \right \Vert ^r\right ), ~~\mbox{by Lemma \ref{lem1}}\\
		&= &n^{r-1} \left (\sum_{i=1}^{n} \left \langle  f^2(|X_i|)B_i \hat{k}_{\lambda},B_i\hat{k}_{\lambda} \right \rangle^{\frac{r}{2}}  \left \langle  g^2(|X_i^*|)A_i \hat{k}_{\lambda},A_i\hat{k}_{\lambda} \right \rangle^{\frac{r}{2}}\right )\\
		&=& n^{r-1} \left (\sum_{i=1}^{n} \left \langle  B_i^*f^2(|X_i|)B_i \hat{k}_{\lambda},\hat{k}_{\lambda} \right \rangle^{\frac{r}{2}}  \left \langle  A_i^*g^2(|X_i^*|)A_i \hat{k}_{\lambda},\hat{k}_{\lambda} \right \rangle^{\frac{r}{2}}\right )\\
		&\leq &n^{r-1} \left (\sum_{i=1}^{n} \left \langle  \left[B_i^*f^2(|X_i|)B_i\right]^{\frac{r}{2\nu}} \hat{k}_{\lambda},\hat{k}_{\lambda} \right \rangle^{\nu}  \left \langle  \left[A_i^*g^2(|X_i^*|)A_i\right]^{\frac{r}{2(1-\nu)}} \hat{k}_{\lambda},\hat{k}_{\lambda} \right \rangle^{1-\nu} \right),\\ &&\,\,\,\,\,\,\,\,\,\,\,\,\,\,\,\,\,\,\,\,\,\,\,\,\,\,\,\,\,\,\,\,\,\,\,\,\,\,\,\,\,\,\,\,\,\,\,\,\,\,\,\,\,\,\,\,\,\,\,\,\,\,\,\,\,\,\,\,\,\,\,\,\,\,\,\,\,\,\,\,\,\,\,\,\,\,\,\,\,\,\,\,\,\,\,\,\,\,\,\,\,\,\,\,\,\,\,\,\,\,\,\,\,\,\,\,\,\,\,\,\,\,\,\,\,\,\,\,\,\,\,\,\,\,\,\,\,\,\,\,\,\,\,\,\,\,\,\,\,\,~~\mbox{by Lemma \ref{lemma2} }\\
		&\leq &n^{r-1} \Big (\sum_{i=1}^{n} \big(\nu \left \langle  \left[B_i^*f^2(|X_i|)B_i\right]^{\frac{r}{2\nu}} \hat{k}_{\lambda},\hat{k}_{\lambda} \right \rangle \\ &+&  (1-\nu) \left \langle  \left[A_i^*g^2(|X_i^*|)A_i\right]^{\frac{r}{2(1-\nu)}} \hat{k}_{\lambda},\hat{k}_{\lambda} \right \rangle  \big) \Big ), a^\nu b^{1-\nu}\leq \nu a +(1-\nu) b, a,b\geq 0\\
		&\leq &\sqrt{2}n^{r-1} \Big (\Big |\nu \sum_{i=1}^{n}  \left \langle  \left[B_i^*f^2(|X_i|)B_i\right]^{\frac{r}{2\nu}} \hat{k}_{\lambda},\hat{k}_{\lambda} \right \rangle   \\
	&+& { \rm i}(1-\nu) \sum_{i=1}^{n} \left \langle  \left[A_i^*g^2(|X_i^*|)A_i\right]^{\frac{r}{2(1-\nu)}} \hat{k}_{\lambda},\hat{k}_{\lambda} \right \rangle \Big| \Big),   |a+b|\leq \sqrt{2}|a+{\rm i }b|, a,b\in \mathbb{R} 
	\end{eqnarray*}
	\begin{eqnarray*}
		&=& \sqrt{2}n^{r-1}  \Big |\Big \langle  \Big( \sum_{i=1}^{n}\Big(\nu \left[B_i^*f^2(|X_i|)B_i\right]^{\frac{r}{2\nu}}\\
		&& +{\rm i}(1-\nu)\left[A_i^*g^2(|X_i^*|)A_i\right]^{\frac{r}{2(1-\nu)}}\Big)\Big) \hat{k}_{\lambda},\hat{k}_{\lambda} \Big \rangle \Big |\\
		&\leq &\sqrt{2}n^{r-1}\textbf{ber}\left( \sum_{i=1}^{n}\left(\nu \left[B_i^*f^2(|X_i|)B_i\right]^{\frac{r}{2\nu}}+{\rm i}(1-\nu)\left[A_i^*g^2(|X_i^*|)A_i\right]^{\frac{r}{2(1-\nu)}}\right)\right).
	\end{eqnarray*}
	Taking supremum over all $\lambda\in\Omega$, we get
    \begin{align*}
         &\textbf{ber}^r\left( \sum_{i=1}^{n}A_i^*X_iB_i \right)\\ & \leq 
         \sqrt{2}n^{r-1}\textbf{ber}\left(  \sum_{i=1}^{n}\left(\nu \left[B_i^*f^2(|X_i|)B_i\right]^{\frac{r}{2\nu}}+{\rm i}(1-\nu)\left[A_i^*g^2(|X_i^*|)A_i\right]^{\frac{r}{2(1-\nu)}}\right) \right).
    \end{align*}
\end{proof}

The following corollaries  now follow easily from Theorem \ref{theo6}.
\begin{cor}\label{theo6cor1}
	Let $A_i,B_i, X_i\in \mathcal{B}(\mathcal{H}), i=1,2, \ldots,n$.  Let $f$ and $g$ be two nonnegative functions on $[0,\infty)$ which are continuous and satisfy the relation $f(t)g(t)=t$ for all $t\in [0,\infty)$. Then for all $r\geq 1$ and $0\leq \alpha \leq 1$, we have
	\begin{eqnarray*}
	 &&(i)\,\textbf{ber}^r\left( \sum_{i=1}^{n}A_i^*X_iB_i \right)\\
	&& \,\,\,\,\,\,\,\,\,\,\,\, \leq \frac{n^{r-1}}{\sqrt{2}}\textbf{ber}\left(  \sum_{i=1}^{n}\left( \left[B_i^*f^2(|X_i|)B_i\right]^r+{\rm i}\left[A_i^*g^2(|X_i^*|)A_i\right]^r\right)      \right),  \\
	 &&(ii)\,\textbf{ber}^r\left( \sum_{i=1}^{n}X_i \right) \leq \frac{n^{r-1}}{\sqrt{2}}\textbf{ber}\left(  \sum_{i=1}^{n}\left( f^{2r}(|X_i|) +{\rm i} ~g^{2r}(|X_i^*|)  \right)      \right),  \\
 &&(iii)\, \textbf{ber}^r\left( \sum_{i=1}^{n}A_i^*X_iB_i \right) \\
&&\,\,\,\,\,\,\,\,\,\,\,\,  \leq \frac{n^{r-1}}{\sqrt{2}}\textbf{ber}\left(  \sum_{i=1}^{n}\left( \left[B_i^*|X_i|^{2\alpha}B_i\right]^r+{\rm i}\left[A_i^*|X_i^*|^{2(1-\alpha)}A_i\right]^r\right)      \right). 
\end{eqnarray*}
\end{cor}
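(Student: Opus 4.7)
The plan is to derive each of the three statements in Corollary \ref{theo6cor1} as specializations of Theorem \ref{theo6}. In all three parts the hypothesis $r\geq 2\max\{\nu,1-\nu\}$ collapses to $r\geq 1$ once a convenient value of $\nu$ is chosen, so the stated range of $r$ will come out automatically.

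For part (i), I would set $\nu=\tfrac{1}{2}$ in Theorem \ref{theo6}. With this choice the exponents $\tfrac{r}{2\nu}$ and $\tfrac{r}{2(1-\nu)}$ both collapse to $r$, and the hypothesis $r\geq 2\max\{\tfrac{1}{2},\tfrac{1}{2}\}=1$ agrees with the stated range. The summand inside the Berezin number becomes
\[
\tfrac{1}{2}\bigl[B_i^*f^2(|X_i|)B_i\bigr]^{r}+\tfrac{{\rm i}}{2}\bigl[A_i^*g^2(|X_i^*|)A_i\bigr]^{r},
\]
so I would pull the common scalar $\tfrac{1}{2}$ out of $\textbf{ber}$ using the homogeneity property $\textbf{ber}(\alpha T)=|\alpha|\,\textbf{ber}(T)$. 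Combined with the prefactor $\sqrt{2}\,n^{r-1}$ this produces the coefficient $\tfrac{n^{r-1}}{\sqrt{2}}$, which is exactly the bound in (i).

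For part (ii), I would specialize (i) by choosing $A_i=B_i=I$ for each $i$. Then $B_i^*f^2(|X_i|)B_i=f^2(|X_i|)$ and its $r$-th power equals $f^{2r}(|X_i|)$ because $f^2(|X_i|)$ is a positive operator that commutes with its own powers; the analogous identity handles the $g$-term, and the left-hand side reduces from $\sum A_i^*X_iB_i$ to $\sum X_i$. For part (iii), I would use (i) with the continuous nonnegative functions $f(t)=t^{\alpha}$ and $g(t)=t^{1-\alpha}$ on $[0,\infty)$, which verify $f(t)g(t)=t$ for every $\alpha\in[0,1]$. Then $f^{2}(|X_i|)=|X_i|^{2\alpha}$ and $g^{2}(|X_i^*|)=|X_i^*|^{2(1-\alpha)}$, and substitution into (i) gives (iii) verbatim.

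There is no genuine obstacle: all three parts are specializations rather than new computations, and the only point to keep an eye on is the clean extraction of the scalar $\tfrac{1}{2}$ from inside $\textbf{ber}$ in step (i), which is justified by the homogeneity of the Berezin number recorded in the introduction. Everything else is direct substitution.
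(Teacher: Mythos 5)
Your proposal is correct and matches the paper's intent exactly: the paper states only that these corollaries ``follow easily from Theorem \ref{theo6},'' and your specializations ($\nu=\tfrac{1}{2}$ for (i), $A_i=B_i=I$ for (ii), $f(t)=t^{\alpha}$, $g(t)=t^{1-\alpha}$ for (iii)), together with the homogeneity $\textbf{ber}(\tfrac{1}{2}T)=\tfrac{1}{2}\textbf{ber}(T)$ to turn $\sqrt{2}\,n^{r-1}\cdot\tfrac{1}{2}$ into $\tfrac{n^{r-1}}{\sqrt{2}}$, are precisely the intended derivation.
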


\begin{cor}\label{theo6cor20}
	Let $A, B, X\in \mathcal{B}(\mathcal{H})$.  Then  for all $r\geq 1,$
	\begin{eqnarray*}
		& & (i)~\textbf{ber}\left(A^*XB \right) \leq \frac{1}{\sqrt{2}}\textbf{ber}\left(  \left( \left[B^*|X|^{2\alpha}B\right]+{\rm i}\left[A^*|X^*|^{2(1-\alpha)}A\right]\right)\right).\\
	 & & (ii)~\textbf{ber}\left(A^*XB \right) \leq \frac{1}{\sqrt{2}}\textbf{ber}\left(  \left( \left[B^*|X|B\right]+{\rm i}\left[A^*|X^*|A\right]\right)\right).
 	 \end{eqnarray*}
 \end{cor}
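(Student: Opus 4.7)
The plan is to obtain both parts as immediate specializations of Corollary \ref{theo6cor1}, which was just derived from Theorem \ref{theo6}. No new analytic work is needed; the whole statement reduces to choosing the parameters $n$, $r$, and the functions $f,g$ in the general result.

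For part (i), I would take $n=1$ and $r=1$ in Corollary \ref{theo6cor1}(iii), with $A_1=A$, $B_1=B$, $X_1=X$. The sum over $i$ collapses to a single term, the prefactor becomes $n^{r-1}/\sqrt{2}=1/\sqrt{2}$, and the exponent $r=1$ on the bracketed positive operators disappears, giving exactly
\[
\textbf{ber}(A^*XB)\leq\frac{1}{\sqrt{2}}\,\textbf{ber}\!\left(B^*|X|^{2\alpha}B+{\rm i}\,A^*|X^*|^{2(1-\alpha)}A\right).
\]
Equivalently, one may start from Corollary \ref{theo6cor1}(i) and pick the pair $f(t)=t^{\alpha}$, $g(t)=t^{1-\alpha}$, which are continuous and nonnegative on $[0,\infty)$ and satisfy $f(t)g(t)=t$, so that $f^2(|X|)=|X|^{2\alpha}$ and $g^2(|X^*|)=|X^*|^{2(1-\alpha)}$.

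For part (ii), the cleanest route is to set $\alpha=1/2$ in part (i). Then $|X|^{2\alpha}=|X|$ and $|X^*|^{2(1-\alpha)}=|X^*|$, producing
\[
\textbf{ber}(A^*XB)\leq\frac{1}{\sqrt{2}}\,\textbf{ber}\!\left(B^*|X|B+{\rm i}\,A^*|X^*|A\right).
\]
The same inequality also drops out of Corollary \ref{theo6cor1}(i) with $n=1$, $r=1$, and $f(t)=g(t)=\sqrt{t}$.

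There is essentially no obstacle: the only point worth checking is that the assumption $r\geq 2\max\{\nu,1-\nu\}$ used in Theorem \ref{theo6} to pass from $\nu=1/2$ to Corollary \ref{theo6cor1} is compatible with the choice $r=1$, which it is since $2\max\{1/2,1/2\}=1$. Beyond this, the derivation is purely substitutional.
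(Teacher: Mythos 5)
Your proposal is correct and coincides with the paper's intended argument: the paper offers no separate proof, stating only that the corollary ``follows easily from Theorem \ref{theo6},'' and the specialization you describe ($n=1$, $r=1$, $\nu=1/2$, $f(t)=t^{\alpha}$, $g(t)=t^{1-\alpha}$, then $\alpha=1/2$ for part (ii)) is exactly that derivation, including the correct check that $r=1\geq 2\max\{\nu,1-\nu\}=1$.
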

\begin{cor}\label{theo6cor4}
	Let $ A, B\in \mathcal{B}(\mathcal{H})$. Then 
	\begin{eqnarray*}
	& & (i)~	\textbf{ber}(A) \leq \frac{1}{\sqrt{2}} \textbf{ber}(|A|+{\rm i}|A^*|).\\
	& & (ii)~\textbf{ber}^{r}(A^*B) \leq \frac{1}{\sqrt{2}} \textbf{ber}\left (|B|^{2r}+{\rm i}|A|^{2r}\right), ~\mbox{for all } r \geq 1.
\end{eqnarray*}
\end{cor}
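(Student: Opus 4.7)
The plan is to obtain both inequalities as direct specializations of the results established just above, so that Corollary \ref{theo6cor4} requires no new calculation beyond a careful choice of parameters in Theorem \ref{theo6} (or, equivalently, in Corollary \ref{theo6cor1} and Corollary \ref{theo6cor20}).

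For part (i), I would apply Corollary \ref{theo6cor20}(ii) with the triple $(A,B,X)$ appearing there replaced by $(I,I,A)$, where $I$ denotes the identity operator on $\mathcal{H}$. Under this substitution, $A^{*}XB$ collapses to $A$, the bracket $B^{*}|X|B$ becomes $|A|$, and the bracket $A^{*}|X^{*}|A$ becomes $|A^{*}|$, giving immediately
\[ \textbf{ber}(A) \leq \frac{1}{\sqrt{2}}\,\textbf{ber}\bigl(|A| + {\rm i}\,|A^{*}|\bigr), \]
which is exactly the statement. (Equivalently, one can start from Theorem \ref{theo6} with $n=1$, $A_{1}=B_{1}=I$, $X_{1}=A$, $\nu=\tfrac{1}{2}$, $r=1$, and $f(t)=g(t)=\sqrt{t}$.)

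For part (ii), I would apply Corollary \ref{theo6cor1}(iii) with $n=1$, $A_{1}=A$, $B_{1}=B$, $X_{1}=I$, and $\alpha=\tfrac{1}{2}$. Then $A_{1}^{*}X_{1}B_{1}=A^{*}B$, while $|X_{1}|^{2\alpha}=|X_{1}^{*}|^{2(1-\alpha)}=I$, so the inner brackets reduce to $B^{*}B=|B|^{2}$ and $A^{*}A=|A|^{2}$. Taking the $r$-th power yields $|B|^{2r}$ and $|A|^{2r}$ respectively, and the prefactor $n^{r-1}/\sqrt{2}$ becomes $1/\sqrt{2}$, producing
\[ \textbf{ber}^{r}(A^{*}B) \leq \frac{1}{\sqrt{2}}\,\textbf{ber}\bigl(|B|^{2r} + {\rm i}\,|A|^{2r}\bigr). \]

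I expect no real obstacle: the whole content of the corollary is bookkeeping together with a verification that the specialized parameters satisfy the hypotheses of the parent statements (continuity and nonnegativity of $f,g$, the identity $f(t)g(t)=t$, the admissible range of $\nu$ or $\alpha$, and $r\geq 1$), all of which hold trivially for the choices above. The only minor care needed is to note that $(B^{*}B)^{r}=|B|^{2r}$ and $(A^{*}A)^{r}=|A|^{2r}$ via the functional calculus for positive operators.
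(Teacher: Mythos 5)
Your proposal is correct and is essentially the paper's own route: the paper offers no separate argument for this corollary, stating only that it "follows easily from Theorem \ref{theo6}", and your specializations ($n=1$, $\nu=\tfrac12$, $X=I$ or $A=B=I$ with $f(t)=g(t)=\sqrt{t}$, together with $(B^*B)^r=|B|^{2r}$) are exactly the intended bookkeeping. The hypothesis check $r\geq 2\max\{\nu,1-\nu\}=1$ is also correctly accounted for.
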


\begin{remark}
It is easy to observe that $\textbf{ber}^2(|A|+{\rm i}|A^*|)\leq \textbf{ber}(|A|^2+|A^*|^2).$ Thus, Corollary \ref{theo6cor4}(i) refines the existing bound \cite[Cor. 3.5 (i)]{Trd} $\textbf{ber}^2(A) \leq \frac{1}{{2}} \textbf{ber}(|A|^2+|A^*|^2).$

\end{remark}

\section{Berezin norm of operators}

\noindent We begin this section  with the following  two notations:  Let $\textit{Re}(A):=\frac{A+A^*}{2}$ and $\textit{Im}(A):=\frac{A-A^*}{2{\rm i}}$ be the real and imaginary part of a bounded linear operator $A $ on the Hilbert space $\mathcal{H}$ and $A=\textit{Re}(A)+{\rm i} \textit{Im}(A)$ be the Cartesian decomposition of $A$. Also, we define $ c(A):= inf\{|\langle A\hat{k}_{\lambda},\hat{k}_{\lambda} \rangle|: \lambda \in \Omega\}.$ 
First we obtain the following lower bound.

 \begin{theorem}
	\label{theo7}
	Let $A\in\mathcal{B}(\mathcal{H}).$ Then
	\begin{eqnarray*}
        \textbf{ber}^2(\textit{Re}(A))+\textbf{ber}^2(\textit{Im}(A))
		+ c^2(\textit{Re}(A))+c^2(\textit{Im}(A))
		&\leq& 2\textbf{ber}^2(A)\\
		&\leq& 2 \|A\|^2_{ber}.
	\end{eqnarray*}

\end{theorem}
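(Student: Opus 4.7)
The plan is to exploit the Cartesian decomposition $A=\mathit{Re}(A)+\mathrm{i}\,\mathit{Im}(A)$ together with the fact that $\mathit{Re}(A)$ and $\mathit{Im}(A)$ are self-adjoint, so their Berezin symbols are real-valued. Writing $r(\lambda):=\widetilde{\mathit{Re}(A)}(\lambda)$ and $s(\lambda):=\widetilde{\mathit{Im}(A)}(\lambda)$, linearity of the inner product gives $\widetilde{A}(\lambda)=r(\lambda)+\mathrm{i}\,s(\lambda)$ with $r(\lambda),s(\lambda)\in\mathbb{R}$, hence the Pythagorean-type identity
\[
|\widetilde{A}(\lambda)|^{2}=r(\lambda)^{2}+s(\lambda)^{2}\qquad\text{for every }\lambda\in\Omega.
\]

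Next I would exploit this identity twice. For every $\lambda$, $s(\lambda)^{2}\ge c^{2}(\mathit{Im}(A))$ and $r(\lambda)^{2}\ge c^{2}(\mathit{Re}(A))$, so
\[
r(\lambda)^{2}+c^{2}(\mathit{Im}(A))\;\le\;r(\lambda)^{2}+s(\lambda)^{2}=|\widetilde{A}(\lambda)|^{2}\;\le\;\textbf{ber}^{2}(A),
\]
\[
c^{2}(\mathit{Re}(A))+s(\lambda)^{2}\;\le\;r(\lambda)^{2}+s(\lambda)^{2}=|\widetilde{A}(\lambda)|^{2}\;\le\;\textbf{ber}^{2}(A).
\]
Taking the supremum over $\lambda\in\Omega$ in the first inequality produces $\textbf{ber}^{2}(\mathit{Re}(A))+c^{2}(\mathit{Im}(A))\le\textbf{ber}^{2}(A)$, and doing the same in the second yields $c^{2}(\mathit{Re}(A))+\textbf{ber}^{2}(\mathit{Im}(A))\le\textbf{ber}^{2}(A)$. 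Adding these two scalar inequalities delivers the left-hand bound of the theorem.

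For the right-hand bound $2\,\textbf{ber}^{2}(A)\le 2\,\|A\|_{ber}^{2}$, I would simply invoke property (i) from the introduction, namely $\textbf{ber}(A)\le\|A\|_{ber}$, square it, and multiply by $2$. No further work is needed.

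I do not anticipate any real obstacle: the only subtlety is recognising that the self-adjointness of $\mathit{Re}(A)$ and $\mathit{Im}(A)$ forces $\widetilde{\mathit{Re}(A)},\widetilde{\mathit{Im}(A)}$ to be real-valued, which is what makes $|\widetilde{A}(\lambda)|^{2}$ split as a clean sum of squares. Once this is in hand, the estimate is just the observation that $\sup_{\lambda}r(\lambda)^{2}+\inf_{\lambda}s(\lambda)^{2}$ (and symmetrically) remains a pointwise lower bound for $r(\lambda_{\ast})^{2}+s(\lambda_{\ast})^{2}$ at the supremising point, and the second inequality is a restatement of an already known property of the Berezin norm.
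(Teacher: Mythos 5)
Your proposal is correct and follows essentially the same route as the paper: both rest on the Pythagorean identity $|\widetilde{A}(\lambda)|^{2}=\widetilde{\mathit{Re}(A)}(\lambda)^{2}+\widetilde{\mathit{Im}(A)}(\lambda)^{2}$ coming from the Cartesian decomposition, deduce the two inequalities $\textbf{ber}^{2}(\mathit{Re}(A))+c^{2}(\mathit{Im}(A))\leq\textbf{ber}^{2}(A)$ and $c^{2}(\mathit{Re}(A))+\textbf{ber}^{2}(\mathit{Im}(A))\leq\textbf{ber}^{2}(A)$, and add them, with the second bound being just $\textbf{ber}(A)\leq\|A\|_{ber}$. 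In fact your write-up supplies the sup/inf details that the paper leaves implicit.
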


 \begin{proof}
    We only prove the first inequality. Let $\hat{k}_{\lambda}$ be a normalised reproducing kernel of $\mathcal{H}.$ Then from the Cartesian decomposition of $A$ we get,
	\begin{eqnarray}\label{eqn10}
	|\langle B\hat{k}_{\lambda},\hat{k}_{\lambda}\rangle|^2+|\langle C\hat{k}_{\lambda},\hat{k}_{\lambda}\rangle|^2=|\langle A\hat{k}_{\lambda},\hat{k}_{\lambda}\rangle|^2.
	\end{eqnarray}
	From (\ref{eqn10}), we get the following two inequalities
	\begin{eqnarray}\label{eqn20}
	c^2(\textit{Re}(A))+\textbf{ber}^2(\textit{Im}(A))\leq \textbf{ber}^2(A)
	\end{eqnarray}
	and 
	\begin{eqnarray}\label{eqn30}
	c^2(\textit{Im}(A))+\textbf{ber}^2(\textit{Re}(A))\leq \textbf{ber}^2(A).
	\end{eqnarray}
		It follows from the inequalities (\ref{eqn20}) and (\ref{eqn30}) that 
	\begin{eqnarray*}
		c^2(\textit{Re}(A))+c^2(\textit{Im}(A))+\textbf{ber}^2(\textit{Re}(A))+\textbf{ber}^2(\textit{Im}(A)) \leq 2 \textbf{ber}^2(A).
	\end{eqnarray*}
	This completes the proof.
\end{proof}

Next result is

\begin{theorem} \label{theo8}
	Let $A, B\in \mathcal{B}(\mathcal{H}).$ Then
	\begin{align*}
		\|A+B\|^2_{ber} \leq  \|A\|^2_{ber}+\|B\|^2_{ber}+ \textbf{ber}^\frac{1}{2}(A^*A) \textbf{ber}^\frac{1}{2}(B^*B) +\textbf{ber}(A^*B).
	\end{align*}
\end{theorem}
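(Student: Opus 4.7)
The plan is to unfold the square $\|A+B\|_{ber}^2$ pointwise in $(\lambda,\mu)$, handle the two diagonal terms by the definition of $\|\cdot\|_{ber}$, and treat the cross term by applying the Buzano inequality (Lemma \ref{lem3}) with a carefully chosen triple so that the inner product that survives is $\langle A^*B\hat{k}_\lambda,\hat{k}_\lambda\rangle$, matching the target $\textbf{ber}(A^*B)$.

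I would begin by fixing normalised reproducing kernels $\hat{k}_\lambda,\hat{k}_\mu$ and writing
\[
|\langle (A+B)\hat{k}_\lambda,\hat{k}_\mu\rangle|^2 = |\langle A\hat{k}_\lambda,\hat{k}_\mu\rangle|^2+|\langle B\hat{k}_\lambda,\hat{k}_\mu\rangle|^2+2\,\textrm{Re}\bigl(\overline{\langle A\hat{k}_\lambda,\hat{k}_\mu\rangle}\,\langle B\hat{k}_\lambda,\hat{k}_\mu\rangle\bigr).
\]
The first two terms are bounded by $\|A\|_{ber}^2$ and $\|B\|_{ber}^2$ directly from the definition of the Berezin norm. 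For the cross term, rewrite
\[
\overline{\langle A\hat{k}_\lambda,\hat{k}_\mu\rangle}\,\langle B\hat{k}_\lambda,\hat{k}_\mu\rangle = \langle B\hat{k}_\lambda,\hat{k}_\mu\rangle\,\langle\hat{k}_\mu,A\hat{k}_\lambda\rangle,
\]
and apply Lemma \ref{lem3} with $x:=B\hat{k}_\lambda$, $y:=A\hat{k}_\lambda$, $e:=\hat{k}_\mu$. This is the critical step, because the choice of $e=\hat{k}_\mu$ (as opposed to $\hat{k}_\lambda$) is precisely what converts the residual $|\langle x,y\rangle|$ into $|\langle A^*B\hat{k}_\lambda,\hat{k}_\lambda\rangle|$ rather than the wrong-ordered $|\langle BA^*\hat{k}_\mu,\hat{k}_\mu\rangle|$.

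With this choice, Buzano yields
\[
2\bigl|\overline{\langle A\hat{k}_\lambda,\hat{k}_\mu\rangle}\,\langle B\hat{k}_\lambda,\hat{k}_\mu\rangle\bigr|\leq \|A\hat{k}_\lambda\|\,\|B\hat{k}_\lambda\|+|\langle A^*B\hat{k}_\lambda,\hat{k}_\lambda\rangle|.
\]
Then use the positivity of $A^*A$ (and $B^*B$) together with $\|A\hat{k}_\lambda\|^2=\langle A^*A\hat{k}_\lambda,\hat{k}_\lambda\rangle\leq \textbf{ber}(A^*A)$, and similarly for $B$, to obtain $\|A\hat{k}_\lambda\|\,\|B\hat{k}_\lambda\|\leq \textbf{ber}^{1/2}(A^*A)\,\textbf{ber}^{1/2}(B^*B)$, while trivially $|\langle A^*B\hat{k}_\lambda,\hat{k}_\lambda\rangle|\leq \textbf{ber}(A^*B)$.

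Assembling these three estimates and taking the supremum over $\lambda,\mu\in\Omega$ delivers the stated inequality. The only genuine difficulty is recognising that a single application of Buzano, with the right placement of the unit vector $e$, simultaneously produces the geometric-mean term involving $A^*A,B^*B$ and the specific ordered product $A^*B$; everything else is a direct appeal to the definitions of the Berezin norm and Berezin number.
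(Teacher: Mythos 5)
Your proposal is correct and follows essentially the same route as the paper: expand the square, bound the diagonal terms by the Berezin norms, apply Buzano's inequality (Lemma \ref{lem3}) with $e=\hat{k}_\mu$ to the cross term, and bound $\|A\hat{k}_\lambda\|\,\|B\hat{k}_\lambda\|$ and $|\langle A^*B\hat{k}_\lambda,\hat{k}_\lambda\rangle|$ by the corresponding Berezin quantities before taking the supremum. The only difference is the cosmetic swap of the roles of $x$ and $y$ in Buzano, which changes nothing.
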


\begin{proof}
	Let $\hat{k}_{\lambda}$ and $\hat{k}_{\mu}$ be two normalised reproducing kernel of $\mathcal{H}.$ Then we get,
	\begin{eqnarray*}
		& &|\langle (A+B)\hat{k}_{\lambda},\hat{k}_{\mu} \rangle |^2\\ 
		& \leq& (|\langle A\hat{k}_{\lambda},\hat{k}_{\mu} \rangle |+|\langle B\hat{k}_{\lambda},\hat{k}_{\mu} \rangle |)^2\\
		&= &|\langle A\hat{k}_{\lambda},\hat{k}_{\mu} \rangle |^2+|\langle B\hat{k}_{\lambda},\hat{k}_{\mu} \rangle |^2+2|\langle A\hat{k}_{\lambda},\hat{k}_{\mu} \rangle \langle B\hat{k}_{\lambda},\hat{k}_{\mu} \rangle |\\
		&= &|\langle A\hat{k}_{\lambda},\hat{k}_{\mu} \rangle |^2+|\langle B\hat{k}_{\lambda},\hat{k}_{\mu} \rangle |^2+2|\langle A\hat{k}_{\lambda},\hat{k}_{\mu} \rangle \langle \hat{k}_{\mu},B\hat{k}_{\lambda} \rangle |\\
		&\leq &|\langle A\hat{k}_{\lambda},\hat{k}_{\mu} \rangle |^2+|\langle B\hat{k}_{\lambda},\hat{k}_{\mu} \rangle |^2+\|A\hat{k}_{\lambda}\| \|B\hat{k}_{\lambda}\|+ |\langle A\hat{k}_{\lambda},B\hat{k}_{\lambda} \rangle |,\\
		&& \,\,\,\,\,\,\,\,\,\,\,\,\,\,\,\,\,\,\,\,\,\,\,\,\,\,\,\,\,\,\,\,\,\,\,\,\,\,\,\,\,\,\,\,\,\,\,\,\,\,\,\,\,\,\,\,\,\,\,\,\,\,\,\,\,\,\,\,\,\,\,\,\,\,\,\,\,\,\,\,\,\,\,\,\,\,\,\,\,\,\,\,\,\,\,\,\,\,\,\,\,\,\,\,\,\,\,\,\,\,\,\,\,\,\,\,\,\,\,\,\,\,\,\,\,\,\,\,\,\,\,\,~~\mbox{By Lemma \ref{lem3}}\\
		&\leq &|\langle A\hat{k}_{\lambda},\hat{k}_{\mu} \rangle |^2+|\langle B\hat{k}_{\lambda},\hat{k}_{\mu} \rangle |^2+|\langle A^*A\hat{k}_{\lambda},\hat{k}_{\lambda}\rangle |^\frac{1}{2}|\langle B^*B\hat{k}_{\lambda},\hat{k}_{\lambda}\rangle |^\frac{1}{2} \\
		&& \,\,\,\,\,\,\,\,\,\,\,\,\,\,\,\,\,\,\,\,\,\,\,\,\,\,\,\,\,\,\,\,\,\,\,\,\,\,\,\,\,\,\,\,\,\,\,\,\,\,\,\,\,\,\,\,\,\,\,\,\,\,\,\,\,\,\,\,\,\,\,\,\,\,\,\,\,\,\,\,\,\,\,\,\,\,\,\,\,\,\,\,\,\,\,\,\,\,\,\,\,\,\,\,\,\,\,\,\,\,\,\,\,\,\,\,\,\,\,\,\,\,\,\,\,\,\,\,\,\,\,\,+ |\langle A^*B\hat{k}_{\lambda},\hat{k}_{\lambda} \rangle |\\
		&\leq&  \|A\|^2_{ber}+\|B\|^2_{ber}+\textbf{ber}^\frac{1}{2}(A^*A) \textbf{ber}^\frac{1}{2}(B^*B)+ \textbf{ber}(A^*B).
	\end{eqnarray*}
Taking supremum over all $\lambda,\mu\in\Omega$, we get,
\begin{eqnarray*}
\|A+B\|^2_{ber} \leq \|A\|^2_{ber}+\|B\|^2_{ber}+\textbf{ber}^\frac{1}{2}(A^*A) \textbf{ber}^\frac{1}{2}(B^*B)+\textbf{ber}(A^*B).
\end{eqnarray*}
   
\end{proof}

Finally we obtain the following berezin norm inequality.

\begin{theorem} \label{theo9}
	Let $A, B\in \mathcal{B}(\mathcal{H}).$ Then the following inequalities hold:
	\begin{eqnarray*}\label{eqn16}
	\|A+B\|^2_{ber} \leq \|A\|^2_{ber}+\|B\|^2_{ber}+\frac{1}{2}\textbf{ber}(A^*A+B^*B)+\textbf{ber}(A^*B)
	\end{eqnarray*}
	and
	\begin{eqnarray*}\label{eqn17}
	\|A+B\|^2_{ber} \leq \|A\|^2_{ber}+\|B\|^2_{ber}+\frac{1}{2}\textbf{ber}(AA^*+BB^*)+\textbf{ber}(AB^*).
	\end{eqnarray*}
\end{theorem}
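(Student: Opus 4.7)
The plan is to run the argument of Theorem~\ref{theo8} almost verbatim, but at the step that handled the cross term via Cauchy--Schwarz, substitute an AM--GM estimate instead. First, I would apply the triangle inequality to expand $|\langle (A+B)\hat{k}_{\lambda},\hat{k}_{\mu}\rangle|^{2}$ and bound the two squared terms by $\|A\|_{ber}^{2}$ and $\|B\|_{ber}^{2}$. The cross term $2|\langle A\hat{k}_{\lambda},\hat{k}_{\mu}\rangle||\langle B\hat{k}_{\lambda},\hat{k}_{\mu}\rangle|$ I would rewrite using $\overline{\langle B\hat{k}_{\lambda},\hat{k}_{\mu}\rangle}=\langle \hat{k}_{\mu},B\hat{k}_{\lambda}\rangle$ and then apply Buzano's inequality (Lemma~\ref{lem3}) with $x=A\hat{k}_{\lambda}$, $y=B\hat{k}_{\lambda}$, $e=\hat{k}_{\mu}$ to obtain an upper bound of $\|A\hat{k}_{\lambda}\|\,\|B\hat{k}_{\lambda}\|+|\langle A^{*}B\hat{k}_{\lambda},\hat{k}_{\lambda}\rangle|$.

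The key departure from Theorem~\ref{theo8} is in the treatment of $\|A\hat{k}_{\lambda}\|\,\|B\hat{k}_{\lambda}\|$: rather than estimating it by the product $\textbf{ber}^{1/2}(A^{*}A)\,\textbf{ber}^{1/2}(B^{*}B)$, I would use AM--GM,
\[\|A\hat{k}_{\lambda}\|\,\|B\hat{k}_{\lambda}\|\leq\tfrac{1}{2}\bigl(\|A\hat{k}_{\lambda}\|^{2}+\|B\hat{k}_{\lambda}\|^{2}\bigr)=\tfrac{1}{2}\langle (A^{*}A+B^{*}B)\hat{k}_{\lambda},\hat{k}_{\lambda}\rangle\leq\tfrac{1}{2}\textbf{ber}(A^{*}A+B^{*}B).\]
Combining this with the bound $|\langle A^{*}B\hat{k}_{\lambda},\hat{k}_{\lambda}\rangle|\leq\textbf{ber}(A^{*}B)$ and taking supremum over $\lambda,\mu\in\Omega$ yields the first inequality.

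For the second inequality I would simply substitute $A\mapsto A^{*}$ and $B\mapsto B^{*}$ into the first inequality just established. Property~(iii) from the Introduction, $\|X^{*}\|_{ber}=\|X\|_{ber}$, leaves the left-hand side and the two Berezin-norm terms unchanged, while the identities $(A^{*})^{*}A^{*}=AA^{*}$, $(B^{*})^{*}B^{*}=BB^{*}$, and $(A^{*})^{*}B^{*}=AB^{*}$ convert the remaining terms on the right-hand side into $\tfrac{1}{2}\textbf{ber}(AA^{*}+BB^{*})$ and $\textbf{ber}(AB^{*})$ respectively.

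No significant obstacle is anticipated; the entire computation mirrors the proof of Theorem~\ref{theo8}, and the only substantive new step is recognizing that an AM--GM bound on $\|A\hat{k}_{\lambda}\|\,\|B\hat{k}_{\lambda}\|$ can be collapsed into a single Berezin number of the sum $A^{*}A+B^{*}B$, thereby producing a variant bound in which a single operator-sum Berezin term replaces a product of two separate Berezin quantities.
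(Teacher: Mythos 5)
Your proposal is correct and coincides with the paper's own proof: the authors likewise expand via the triangle inequality, apply Buzano's inequality (Lemma \ref{lem3}) to the cross term, bound $\|A\hat{k}_{\lambda}\|\,\|B\hat{k}_{\lambda}\|$ by $\tfrac{1}{2}\bigl(\|A\hat{k}_{\lambda}\|^{2}+\|B\hat{k}_{\lambda}\|^{2}\bigr)=\tfrac{1}{2}\langle (A^{*}A+B^{*}B)\hat{k}_{\lambda},\hat{k}_{\lambda}\rangle$, and obtain the second inequality by replacing $A,B$ with $A^{*},B^{*}$. No differences worth noting.
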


\begin{proof}
	Let $\hat{k}_{\lambda}$ and $\hat{k}_{\mu}$ be two normalised reproducing kernel of $\mathcal{H}.$ Then we get,
	\begin{align*}
		&|\langle (A+B)\hat{k}_{\lambda},\hat{k}_{\mu} \rangle |^2 \\ &\leq  (|\langle A\hat{k}_{\lambda},\hat{k}_{\mu} \rangle |+|\langle B\hat{k}_{\lambda},\hat{k}_{\mu} \rangle |)^2\\
		&= |\langle A\hat{k}_{\lambda},\hat{k}_{\mu} \rangle |^2+|\langle B\hat{k}_{\lambda},\hat{k}_{\mu} \rangle |^2+2|\langle A\hat{k}_{\lambda},\hat{k}_{\mu} \rangle \langle B\hat{k}_{\lambda},\hat{k}_{\mu} \rangle |\\
		&= |\langle A\hat{k}_{\lambda},\hat{k}_{\mu} \rangle |^2+|\langle B\hat{k}_{\lambda},\hat{k}_{\mu} \rangle |^2+2|\langle A\hat{k}_{\lambda},\hat{k}_{\mu} \rangle \langle \hat{k}_{\mu},B\hat{k}_{\lambda} \rangle |\\
		&\leq |\langle A\hat{k}_{\lambda},\hat{k}_{\mu} \rangle |^2+|\langle B\hat{k}_{\lambda},\hat{k}_{\mu} \rangle |^2+\|A\hat{k}_{\lambda}\| \|B\hat{k}_{\lambda}\|+ |\langle A\hat{k}_{\lambda},B\hat{k}_{\lambda} \rangle |,\\
		&&\,\,\,\,\,\,\,\,\,\,\,\,\,\,\,\,\,\,\,\,\,\,\,\,\,\,\,\,\,\,\,\,\,\,\,\,\,\,\,\,\,\,\,\,\,\,\,\,\,\,\,\,\,\,\,\,\,\,\,\,\,\,\,\,\,\,\,\,\,\,\,\,\,\,\,\,\,\,\,\,\,\,\,\,\,\,\,\,\,\,\,\,\,\,\,\,\,\,\,\,\,\,\,\,\,\,\,\,\,\,\,\,\,\,\,\,\,\,\,\,\,\,\,\,\,\,\,\,\,\,\,\,~\mbox{By Lemma \ref{lem3}}\\
		&\leq |\langle A\hat{k}_{\lambda},\hat{k}_{\mu} \rangle |^2+|\langle B\hat{k}_{\lambda},\hat{k}_{\mu} \rangle |^2+\frac{1}{2}(\|A\hat{k}_{\lambda}\|^2+ \|B\hat{k}_{\lambda}\|^2)+ |\langle A\hat{k}_{\lambda},B\hat{k}_{\lambda} \rangle |\\
		&\leq |\langle A\hat{k}_{\lambda},\hat{k}_{\mu} \rangle |^2+|\langle B\hat{k}_{\lambda},\hat{k}_{\mu} \rangle |^2 +\frac{1}{2}\langle (A^*A+B^*B)\hat{k}_{\lambda},\hat{k}_{\lambda} \rangle + |\langle A^*B\hat{k}_{\lambda},\hat{k}_{\lambda} \rangle |\\
		&\leq \|A\|^2_{ber}+\|B\|^2_{ber}+\frac{1}{2}\textbf{ber}(A^*A+B^*B)+\textbf{ber}(A^*B).
	\end{align*}
 Taking supremum over all $\lambda,\mu\in\Omega$, we get,
\begin{eqnarray*}
	\|A+B\|^2_{ber} \leq\|A\|^2_{ber}+\|B\|^2_{ber}+\frac{1}{2}\textbf{ber}(A^*A+B^*B)+\textbf{ber}(A^*B).
\end{eqnarray*}
Replacing $A$ by $A^*$ and $B$ by $B^*$ in the above inequality we get, 
\begin{eqnarray*}
	\|A+B\|^2_{ber} \leq \|A\|^2_{ber}+\|B\|^2_{ber}+\frac{1}{2}\textbf{ber}(AA^*+BB^*)+\textbf{ber}(AB^*).
\end{eqnarray*} 
This completes the proof. 
\end{proof}

\bibliographystyle{amsplain}

\end{document}